\newtheorem{theorem}{Theorem}[section]
\newtheorem{lemma}{Lemma}[section]
\newtheorem{cor}{Corollary}[section]
\newtheorem{prop}{Proposition}[section]
\begin{document}
\title{The Scenery Flow for Self-Affine Measures.}
\author{Tom Kempton}
\maketitle
\abstract{\noindent We describe the scaling scenery associated to Bernoulli measures supported on separated self-affine sets under the condition that certain projections of the measure are absolutely continuous.}

\setlength{\parskip}{0.3cm} \setlength{\parindent}{0cm}

\section{Introduction}

The scenery flow is an extremely useful tool for studying fractal sets and measures. Recently several long standing conjectures in fractal geometry have been resolved using the scenery flow. In particular, Furstenberg proved a dimension conservation result for uniformly scaling measures which generate ergodic fractal distributions and Hochman and Shmerkin gave conditions under which every projection of a fracal measure $\mu$ has dimension equal to $\min\{\dim_H(\mu),1\}$, \cite{FurstenbergDC, HochShmerProj}. The scenery flow has also been used to prove several important results in geometric measure theory, \cite{KSS,OrponenDistance, SSS1}. For this reason, much attention has been given recently to the problem of understanding the scenery flow for various classes of fractal measures, and in particular the question of whether they are uniformly scaling and whether they generate ergodic fractal distributions.

The scenery flow for non-overlapping self-similar and self-conformal measures is well understood, \cite{BF1, BF2, FPConformalScaling, Patzschke}. In the self-affine setting, the scenery flow has previously been studied for measures on Bedford-McMullen carpets, \cite{FFS, Almarza}, and Hochman asked whether it can be understood more generally \cite{HochmanDynamics}. In this article we study the scenery flow for a wide class of self-affine measures which satisfy a cone condition and a projection condition, given later.

There is much interesting dynamics associated to self-affine sets and measures which is not present in the self-similar case. In particular, iterated function systems defining a self-affine set give rise to further iterated function systems on projective space which describe the way in which straight lines through the origin are mapped onto each other by affine maps. This second iterated function system defines the Furstenberg measure on projective space, which is crucial to understanding self-affine measures. Recently formulae for the Hausdorff dimension of a self-affine set were given in terms of the dimension of projections of the self-affine measure in typical directions chosen according to the Furstenberg measure, \cite{ Barany, FalKemp}. Dimension theory for self-affine sets is an extremely active topic of research, see for example the survey papers \cite{PesinChen, FalconerAffineSurvey}, and yet a general theory does not yet exist. We hope that as the understanding of the scenery flow for self-affine sets becomes more developed, a general theory of dimension for self-affine sets may emerge.

In this article we build on our work with Falconer on the dynamics of self-affine sets, \cite{FalKemp}, to describe the scenery flow for self-affine measures associated to strictly positive matrices under the condition that projections of the self-affine measure in typical directions for the Furstenberg measure are absolutely continuous. This projection condition holds typically on large parts of parameter space, \cite{BPS}, and holds everywhere for some open sets in parameter space \cite{FalKemp}. Very recently the scenery flow for self-affine sets rather than measures was studied in \cite{KHR}. Additionally, we study the scenery flow for slices through self-affine measures without assuming any condition on projections.

\subsection{The Scenery Flow}

Let $\mathcal M$ denote the space of Borel probability measures $\mu$ supported on the unit disk $X$ with $0\in supp(\mu)$. Let $d$ denote the Prokhorov metric on $\mathcal M$, given by
\[
d(\mu,\nu):=\inf\{\epsilon: \mu(A)\leq \nu(A_{\epsilon})+\epsilon, \nu(A)\leq \mu(A_{\epsilon})+\epsilon \text{ for all Borel sets }A\}
\]
where $A_{\epsilon}:=\{ x\in\mathbb R^2: d(x,y)<\epsilon$ for some $y\in A\}$. The Prokhorov metric metrises the weak$^*$ topology.

Let $B(x,r)$ denote the ball of radius $r$ centred at $x\in\mathbb R^2$. Given $\mu\in\mathcal M$ we let $S_t(\mu)$ denote the measure $\mu|_{B(0,e^{-t})}$, normalised to have mass $1$ and mapped onto the unit disk by the dilation map $ x\to e^t x$ for $x\in\mathbb R^2$. Note that $S_{t+s}(\mu)=S_t(S_s(\mu))$ and so $S$ is a well defined flow on the space $\mathcal M$.


We refer to $S_t$-invariant measures $P$ on the space $\mathcal M$ as distributions. Applying ergodic theory to the system $(\mathcal M, P, S_t)$ turns out to be extremely useful in geometric measure theory and the study of fractals.

The flow $S_t$ describes the process of zooming in on the measure $\mu$ around the origin. If we are interested in zooming in on some other point $x\in\mathbb R^2$ we can first apply the map $T_x$ given by $T_x(y):=y-x$ and then apply $S_t$ to the resulting measure. For shorthand, we let $S_{t,x}(\mu):=S_t\circ T_x(\mu)$.

We let
\[
<\mu>_{T,x}:=\frac{1}{T}\int_0^T\delta_{S_{t,x}(\mu)} dt
\]
be called the scenery distribution of $\mu$ at $x$ up to time $T$. $<\mu>_{T,x}$ gives mass
\[
\frac{1}{T}\int_0^T \chi_A(S_{t,x}(\mu))dt
\]
to Borel subsets $A$ of $\mathcal M$. If $<\mu>_{T,x}\to P$ in the weak$^*$ topology as $t\to\infty$ we say that $\mu$ {\it generates} $P$ at $x$. The measure $\mu$ is known as a {\it uniformly scaling measure} if it generates the same distribution $P$ at $\mu$-almost every point $x$, in which case we say $\mu$ generates $P$.

The quasi-Palm property is a property of $S_t$-invariant distributions which describes a kind of translation invariance. We say that a distribution $P$ is {\it quasi-Palm} if, for a subset $A$ of $\mathcal M$ we have $P(A)=0$ if and only if the measures $S_{0,x}(\mu)$ obtained by choosing $\mu$ according to $P$, choosing $x$ according to $\mu$ are almost surely not in $A$. See \cite{HochmanDynamics} for a more full discussion of the quasi-Palm property.

An {\it ergodic fractal distribution} is an $S_t$ invariant, ergodic probability distribution on $\mathcal M$ which is quasi-Palm. The best case scenario for inferring properties of measures $\mu$ from the distributions they generate is that $\mu$ is a uniformly scaling measure generating an ergodic fractal distribution, this will not be the case for the class of self-affine measures which we consider because of a rotation element which depends upon the point around which we are zooming in, but if were to disregard this rotational effect then the generated measures would indeed be uniformly scaling.



\section{A First Example}\label{PrzSection}
We begin by studying an example of a self-affine measure which demonstrates the extra difficulties associated with studying the scenery flow for self-affine, as opposed to self-similar, measures. This example also demonstrates how, when certain relevant projections of the self-affine measure are absolutely continuous, these extra difficulties can be overcome.

The examples we study are a class of self-affine carpets first studied by Przytycki and Urbanski \cite{PrzUrb}. These carpets have rather less structure than Bedford-McMullen carpets, and so previous techniques of \cite{FFS, Almarza} cannot be applied. For this example we scale along squares rather than balls. 

For $\lambda\in(\frac{1}{2},1)$ consider the self affine set $E_{\lambda}\subset [0,1]^2$ which is the attractor of the iterated function system given by contractions
\[
T_0(x,y)=\left(\lambda x, \frac{y}{3}\right),~T_1(x,y)=\left(\lambda x+(1-\lambda), \frac{y+2}{3}\right).
\]

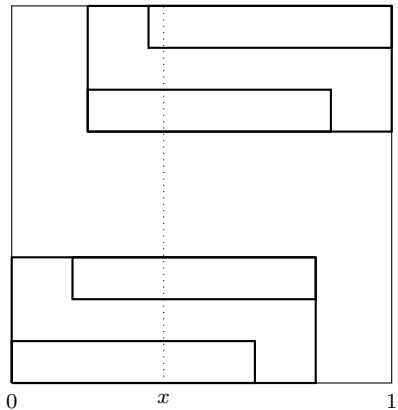
\begin{figure}[h]
\centering
\begin{tikzpicture}[scale=5]
\draw(0,0)node[below]{\scriptsize 0}--(1,0)node[below]{\scriptsize $1$}--(1,1)--(0,1)--(0,0);
\draw[thick](0,0)--(.8,0)--(0.8,0.33333)--(0,0.33333)--(0,0);
\draw[thick](0.2,1)--(0.2,0.66666)--(1,0.66666)--(1,1)--(0.2,1);
\draw[thick](0,0)--(0.64,0)--(0.64,0.11111)--(0,0.11111)--(0,0);
\draw[thick](0.8,0.33333)--(0.16,0.33333)--(0.16,0.22222)--(0.8,0.22222)--(0.8,0.33333);
\draw[thick](1,1)--(0.36,1)--(0.36,0.88888)--(1,0.88888)--(1,1);
\draw[thick](0.2,0.66666)--(0.84,0.66666)--(0.84,0.77777)--(0.2,0.77777)--(0.2,0.66666);
\draw[dotted](0.4,0)node[below]{\scriptsize $x$}--(0.4,1);
\end{tikzpicture}\caption{The first two levels of $E_{0.8}$}
\end{figure}

For $a_1\cdots a_n\in\{0,1\}^n$ we let \[E_{a_1\cdots a_n}:= T_{a_1\cdots a_n}(E_{\lambda})\]
where
\[
T_{a_1\cdots a_n}:=T_{a_1}\circ T_{a_2}\circ\cdots T_{a_n}.
\]
Each point $(x,y)\in E_{\lambda}$ has a unique code $\underline a\in\{0,1\}^{\mathbb N}$ such that $(x,y)\in E_{a_1\cdots a_n}\forall n\in\mathbb N$. We define the map $\pi:\{0,1\}^{\mathbb N}\to E_{\lambda}$ to be the map from a code to the corresponding point $(x,y)\in E_{\lambda}$. Let $\mu$ be the measure which arises from mapping the $(\frac{1}{2},\frac{1}{2})$ Bernoulli measure on $\{0,1\}^{\mathbb N}$ to $E_{\lambda}$ by the coding map $\pi$. 

Now given a point $\pi(\underline a)\in E_{\lambda}$ we let $B(\pi(\underline a), 3^{-n})$ denote the square, centred at $\pi(\underline a)$, of side length $2.3^{-n}$. We further denote $R(\sigma^n(\underline a), (3\lambda)^{-n})$ the rectangle centred at $\pi(\sigma^n(\underline a))$ of height $2$ and width $2.(3\lambda)^{-n}<2$.

We would like to understand the measure $S_{n\log 3}(\mu, \underline a)$ obtained by taking $\frac{\mu|_{B(\pi(\underline a), 3^{-n})}}{\mu(B(\pi(\underline a), 3^{-n}))}$ and linearly rescaling it to live on the square $[-1,1]^2$. If our maps $T_i$ were non-overlapping similarities, this rescaling would be a rather straightforward process, we would just need to apply inverses of our contraction $T_i$ which would scale up the small square to get a large square.

Since our maps $T_i$ are affine contractions but not similarities, we instead need to apply a two step process. Our square $B(\pi(\underline a), 3^{-n})$ intersects precisely one level $n$ rectangle in the construction of $E$, namely the rectangle $E_{a_1\cdots a_n}$. First we apply the map $T_{a_1\cdots a_n}^{-1}$ to $B(\pi(\underline a), 3^{-n})$ to get the rectangle $R(\sigma^n(\underline a), (3\lambda)^{-n})$. Here $\sigma$ denotes the shift map on $\{0,1\}^{\mathbb N}$. The self-affinity relation for $\mu$ gives that $\mu|_{R(\sigma^n(\underline a), (3\lambda)^{-n})}$ is an affine copy of the measure $\mu|_{B(\pi(\underline a), 3^{-n})}$.

To complete our process, we need to stretch the rectangle $R(\sigma^n(\underline a), (3\lambda)^{-n})$ horizontally by a factor of $(3\lambda)^n$ and translate the resulting square onto $[-1,1]^2$. Denote by $D(\underline b,n)$ the map which stretches the rectangle $R(\underline b, (3\lambda)^{-n})$ linearly onto $[-1,1]^2$. We have
\begin{equation}\label{Seqex}
S_{t,\underline a}(\mu)= \left(\frac{\mu|_{B(\pi(\underline a), 3^{-n})}}{\mu(B(\pi(\underline a), 3^{-n}))}\right)\circ T_{a_1\cdots a_n}\circ D(\sigma^n(\underline a),n)^{-1}
\end{equation}

There are three key observations which allow us to understand the scenery flow for this example, and for the broader class of self-affine measures considered below.

{\bf Observation 1:} We have
\[
\left(\frac{\mu|_{B(\pi(\underline a), 3^{-n})}}{\mu(B(\pi(\underline a), 3^{-n}))}\right)\circ T_{a_1\cdots a_n}= \dfrac{\mu|_{R(\sigma^n(\underline a), (3\lambda)^{-n})}}{\mu(R(\sigma^n(\underline a), (3\lambda)^{-n}))}
\]

This follows directly from the self-affinity of the measure $\mu$.

{\bf Observation 2:} Suppose that, for $\mu$-almost every $\underline b\in\{0,1\}^{\mathbb N}$, the sequence of measures
\[
\frac{\mu|_{R(\underline b, (3\lambda)^{-n})}}{\mu(R(\underline b, (3\lambda)^{-n}))}\circ D(\underline b,n)^{-1}
\]
on $[-1,1]^2$ converges weak$^*$ to some limit measure $\mu_{\underline b}$ as $n\to\infty$. Then for all $\epsilon>0$ and almost all $\underline a\in\{0,1\}^{\mathbb N}$ there exists a set $A\subset \mathbb N$ with
\[
\lim_{n\to\infty}\frac{1}{n}|A\cap\{1,\cdots,n\}|>1-\epsilon
\]
such that the sequence of measures $S_{n\log 3,\underline a}(\mu)$ restricted to $n\in A$ is weakly-asymptotic to the sequence of measures $\mu_{\sigma^n(\underline a)}$. Hence by the ergodicity of the system $(\{0,1\}^{\mathbb N}, \sigma,\mu)$ we have that $\mu$ is a uniformly scaling measure generating an ergodic fractal distribution. 

This follows immediately from equation \ref{Seqex}. We use Egorov's theorem to turn almost everywhere convergence to $\mu_{\underline b}$ into uniform convergence on a large set of $\underline b\in\{0,1\}^{\mathbb N}$, which in turn allows us to generate the set $A$. 

{\bf Observation 3:} Suppose that the projection of $\mu$ onto the horizontal axis is absolutely continuous. Then the limit measures $\mu_{\underline b}$ of Observation 2 exist for $\mu$-almost every $\underline b$, and hence $\mu$ is a uniformly scaling measure generating an ergodic fractal distribution.

Observation 3 is less straightforward than the previous two. It relies firstly on the fact that one can disintegrate a measure by vertical slicing. Secondly we use Lemma \ref{ACFlow}, given below, which says that the scenery flow converges $\nu$-almost everywhere for measures $\nu$ which are absolutely continuous. The measures $\mu_{\underline b}$ take the form of a vertical slice of $\mu$ through $\underline b$ crossed with Lebesgue measure. The slice measures were described in \cite{KemptonJEMS}. We do not give further justification for observation $3$ here, the corresponding proposition applying to more general self-affine measures is proved later.

An important result on which we rely is the following version of the Lebesgue density theorem.
\begin{lemma}\label{ACFlow}
Given an absolutely continuous measure $\nu$ on $[-1,1]$, for $\nu$-almost every $x$ the scenery flow applied to $\nu$ around $x$ converges to Lebesgue measure.
\end{lemma}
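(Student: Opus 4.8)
The plan is to deduce this from the Lebesgue differentiation theorem, interpreting ``Lebesgue measure'' on $[-1,1]$ as the normalised probability measure $\frac12\mathcal L$, where $\mathcal L$ denotes ordinary Lebesgue measure. Write $\nu = f\,d\mathcal L$ with $f\in L^1[-1,1]$, $f\geq 0$. For a continuous test function $\phi$ on $[-1,1]$ and $t>0$, unwinding the definition of $S_{t,x}$ and substituting $u = e^{t}(y-x)$ in both the numerator and the normalising constant gives the explicit formula
\[
\int \phi \, dS_{t,x}(\nu) = \frac{\int_{-1}^1 \phi(u)\, f(x+e^{-t}u)\,du}{\int_{-1}^1 f(x+e^{-t}u)\,du},
\]
valid whenever the denominator (which equals $e^{t}\nu(B(x,e^{-t}))$) is nonzero. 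Thus the whole problem reduces to controlling the rescaled density $u\mapsto f(x+e^{-t}u)$ as $t\to\infty$.

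First I would restrict attention to the set $G$ of Lebesgue points $x$ of $f$ at which $f(x)>0$. Since $\nu\ll\mathcal L$ and $\mathcal L$-almost every point is a Lebesgue point, while $\nu$-almost every point has $f(x)>0$, the set $G$ has full $\nu$-measure. For $x\in G$, writing $r=e^{-t}$ and substituting $v=x+ru$, the Lebesgue point property is exactly
\[
\int_{-1}^1 |f(x+ru) - f(x)|\,du = \frac{1}{r}\int_{x-r}^{x+r} |f(v)-f(x)|\,dv \longrightarrow 0 \qquad (r\to 0),
\]
and crucially this is a genuine $r\to0$ limit, so no passage to a subsequence is needed and $t$ may range continuously over $[0,\infty)$.

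The final step feeds this into the formula. Any continuous $\phi$ is bounded, say $|\phi|\le M$, so the numerator satisfies $|\int_{-1}^1\phi(u)(f(x+e^{-t}u)-f(x))\,du|\le M\int_{-1}^1|f(x+e^{-t}u)-f(x)|\,du\to0$, whence the numerator tends to $f(x)\int_{-1}^1\phi\,du$; taking $\phi\equiv1$ shows the denominator tends to $2f(x)$. Because $f(x)>0$ the ratio converges to $\frac12\int_{-1}^1\phi\,du$, that is, to integration against $\frac12\mathcal L$. Since the controlling estimate is independent of $\phi$, for each fixed $x\in G$ this convergence holds simultaneously for all continuous $\phi$, yielding weak$^*$ convergence $S_{t,x}(\nu)\to\frac12\mathcal L$.

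I expect no serious obstacle here, as the argument is essentially a quantitative restatement of the Lebesgue density theorem; the content lies in reducing the weak$^*$ convergence of the rescaled measures to the $L^1$-averaging statement at Lebesgue points. The only points needing care are (i) that the normalising denominator stays bounded away from $0$ for large $t$, which is guaranteed by $f(x)>0$ on $G$, and (ii) that convergence holds for continuous rather than merely sequential $t$, which is automatic since the driving limit is a true $r\to 0$ limit. A minor bookkeeping remark is that the order of quantifiers causes no difficulty: the estimate that drives everything does not depend on $\phi$, so we never need to intersect a family of full-measure sets indexed by the test functions.
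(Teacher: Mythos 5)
Your proof is correct, and it follows exactly the route the paper intends: the paper states this lemma without proof, citing it as a ``version of the Lebesgue density theorem,'' and your argument is precisely the standard deduction from the Lebesgue differentiation theorem (restricting to Lebesgue points where $f(x)>0$, rescaling the density, and noting the estimate is uniform over test functions). The only cosmetic remark is that boundary points $x=\pm 1$ should be discarded so that $B(x,e^{-t})\subset[-1,1]$ for large $t$, but these form a $\nu$-null set, so nothing changes.
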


Note that the projected measures of Observation 3 are a well studied family of self-similar measures known as Bernoulli convolutions, which are absolutely continuous for all $\lambda\in(\frac{1}{2},1)$ outside of a family of exceptions which has Hausdorff dimension $0$ \cite{ShmerkinAC}. Thus, combining observations 1,2 and 3, we have the following theorem.

\begin{theorem}\label{PrzTheorem}
For all $\lambda\in(\frac{1}{2},1)$ outside of a set of exceptions of Hausdorff dimension zero, the $\left(\frac{1}{2},\frac{1}{2}\right)$-Bernoulli measures $\mu$ on the fractal $E_{\lambda}$ are uniformly scaling measures which generate an ergodic fractal distribution.
\end{theorem}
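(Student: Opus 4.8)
\section*{Proof proposal for Theorem \ref{PrzTheorem}}

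The plan is to realise Theorem \ref{PrzTheorem} as the composition of Observations 1--3 with the one external input we have not yet used, namely Shmerkin's absolute continuity theorem for Bernoulli convolutions. First I would identify the projection appearing in Observation 3. Projecting $E_\lambda$ and $\mu$ onto the horizontal axis forgets the $y$-coordinate, and the horizontal parts of $T_0$ and $T_1$ are the similarities $x\mapsto\lambda x$ and $x\mapsto\lambda x+(1-\lambda)$. Hence the horizontal projection of $\mu$ is exactly the self-similar measure of the iterated function system $\{\lambda x,\ \lambda x+(1-\lambda)\}$ with weights $(\frac12,\frac12)$, which is an affine copy of the Bernoulli convolution with parameter $\lambda$. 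Since absolute continuity is preserved under affine maps, \cite{ShmerkinAC} applies: this projection is absolutely continuous for every $\lambda\in(\frac12,1)$ lying outside a set of Hausdorff dimension zero. I fix such a $\lambda$ for the remainder.

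Next I would invoke Observation 3 with this absolute continuity to conclude that the limit measures $\mu_{\underline b}$ exist for $\mu$-almost every $\underline b$. Heuristically, one disintegrates $\mu$ along vertical fibres into the slice measures of \cite{KemptonJEMS}; the rescaling $D(\underline b,n)^{-1}$ leaves the vertical direction essentially untouched while stretching the horizontal direction by the diverging factor $(3\lambda)^n$. Absolute continuity of the horizontal marginal then lets Lemma \ref{ACFlow} pin the horizontal component of the rescaled measure to Lebesgue measure at almost every base point, and $\mu_{\underline b}$ emerges as the vertical slice through $\underline b$ crossed with Lebesgue measure.

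With the limits $\mu_{\underline b}$ in hand almost everywhere, Observation 2 finishes the argument mechanically. Equation \eqref{Seqex} together with Observation 1 rewrites $S_{n\log 3,\underline a}(\mu)$ as the rescaled slice through $\sigma^n(\underline a)$; Egorov's theorem promotes almost-everywhere convergence to uniform convergence off a set of small measure, yielding for each $\epsilon>0$ a set $A\subset\mathbb N$ of asymptotic density exceeding $1-\epsilon$ along which $S_{n\log 3,\underline a}(\mu)$ is weakly asymptotic to $\mu_{\sigma^n(\underline a)}$. Letting $\epsilon\to 0$ and using ergodicity of $(\{0,1\}^{\mathbb N},\sigma,\mu)$, the scenery distributions converge to one common limit at $\mu$-almost every $\underline a$, so $\mu$ is uniformly scaling and generates an ergodic fractal distribution; note that because $T_0,T_1$ are diagonal there is no rotation element to contend with here, so the conclusion is clean.

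I expect the genuine difficulty to sit entirely inside Observation 3: once the limit measures exist the rest is formal bookkeeping. The delicate point is to verify that stretching a planar measure by an unbounded horizontal factor really does flatten its fibre structure onto Lebesgue measure in that direction, and this is precisely where absolute continuity of the Bernoulli convolution---rather than mere positivity of its dimension---is indispensable. Accordingly I would defer the full verification of Observation 3 to the more general self-affine proposition proved later, and present the argument above as the reduction of Theorem \ref{PrzTheorem} to that proposition and to \cite{ShmerkinAC}.
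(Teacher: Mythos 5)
Your proposal matches the paper's own route: the paper proves Theorem \ref{PrzTheorem} exactly by combining Observations 1--3 with Shmerkin's absolute continuity theorem for Bernoulli convolutions \cite{ShmerkinAC}, and, like you, it declines to verify Observation 3 directly, deferring it to the more general result proved later (regarding the theorem as a corollary of Theorem \ref{MainTheorem}). Your identification of the horizontal projection as an affine copy of the Bernoulli convolution, and your remark that the diagonal maps remove any rotation element, are precisely the points the paper relies on.
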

A proof of this theorem follows fairly directly from the above three observations. We prefer to regard it as a corollary to the more general Theorem \ref{MainTheorem}. 

The fact that our results for this example hold only for measures on sets $E_{\lambda}$ for which the corresponding Bernoulli convolution is absolutely continuous may seem like a significant restriction, essentially we are restricting to the case that we already understand quite well. However, as one generalises from the carpet like case of this example to more general self-affine sets the absolute continuity of `relevant projections' becomes rather more natural, and the theorems that we prove later can be shown to hold for open sets in parameter space.

\subsection{A Comment on the Projection Condition}
Putting together the three observations above allows one to describe the scenery flow for the measure $\mu$ under the condition that image under vertical projection of $\mu$ is an absolutely continous measure. A similar projection condition is required in the later, more general situation.

One might hope to be able to prove the same results about $\mu$ under the looser projection condition that the vertical projection of $\mu$ is a uniformly scaling measure generating an ergodic fractal distribution, i.e. rather than requiring the convergence of the scenery flow for typical points in the projected measure, one would only require that the scenery flow on the projected measure is asymptotic to an ergodic flow. 

The issue here is that one would have to do consider two ergodic maps simultaneously, the first map $\underline b \to \sigma^n(\underline b)$ governing the way in which the centre point of Observation 1 moves, and the second map doing the time $n\log (3\lambda)$ scenery flow on the vertical projection of $\mu$ around point $\pi(\sigma^n(\underline b))$. We are unable to guarantee that there is no resonance between these two ergodic maps and that the resulting flow generates the ergodic distributions expected. This may be fixable in the specific example of this section, but in the more general setting which follows it appears out of reach for the moment.

\section{Positive Matrices and the Furstenberg Measure}

Let $k\in\mathbb N$ and for each $i\in\{1,\cdots k\}$ let $A_i$ be a real valued $2\times 2$ matrix of norm less than one. We also assume that each entry of each matrix $A_i$ is is strictly positive, for discussion of this `cone condition' and how it can be relaxed see the final section.

For each $i\in\{1,\cdots k\}$ let $ d_i \in\mathbb R^2$ and let $T_i:\mathbb R^2\to\mathbb R^2$ be given by
\[
T_i( x):=A_i(x)+ d_i.
\]
We assume a very strong separation condition, that the maps $T_i$ map the unit disk into disjoint ellipses contained within the unit disk. This separation condition can most likely be weakened somewhat, we do not pursue this here. While the examples of section \ref{PrzSection} do not fit directly into our setting, since the associated matrices are not strictly positive, by rotating $\mathbb R^2$ they can be made to fit in the above setting.

The attractor $E$ of our iterated function system is the unique non-empty compact set satisfying 
\[
E=\bigcup_{i=1}^k T_i(E).
\]
Let
\[
T_{a_1\cdots a_n}:=T_{a_1}\circ T_{a_2}\circ \cdots T_{a_n}
\]
and
\[
E_{a_1\cdots a_n}:=T_{a_1\cdots a_n}(E)
\]
for $a_1\cdots a_n\in\{0,1\}^n$. Let $X$ denote the unit disk and let
\[
X_{a_1\cdots a_n}:= T_{a_1\cdots a_n}(X),
\]
the sets $X_{a_1\cdots a_n}$ form a sequence of nested ellipses. For each $x\in E$ there exists a unique sequence $\underline a\in\Sigma:=\{1,\cdots,k\}^{\mathbb N}$ such that
\[
\pi(\underline a):=\lim_{n\to\infty} T_{a_1\cdots a_n}(0)=x
\]
where $0$ denotes the origin. Let $\mu$ be a Bernoulli measure on $\Sigma$ with associated probabilities $p_1\cdots p_k$. By a slight abuse of notation we also denote by $\mu$ the measure $\mu\circ\pi^{-1}$ on $E$. We wish to describe the scenery flow for $\mu$.

 
The collection $A_i$ of positive matrices defines a second iterated function system on projective space. Given $A_i$, we let $\phi_i$ denote the action of $A_i^{-1}$ on $\mathbb P\mathbb R^1$, that is $\phi_i:\mathbb P\mathbb R^1\to\mathbb P\mathbb R^1$ is such that a straight line passing through the origin at angle $\theta$ is mapped to a straight line through the origin at angle $\phi_i(\theta)$ by $A_i^{-1}$. Since the matrices $A_i$ are strictly positive, the maps $\phi_i$ strictly contract the negative quadrant $\mathcal Q_2$ of $\mathbb P\mathbb R^1$.

For any $\theta\in\mathcal Q_2$ and for any sequence $\underline a\in\Sigma$ the limit
\[
\lim_{n\to\infty} \phi_{a_1}\circ\phi_{a_2}\circ\cdots\circ \phi_{a_n}(\theta)
\]
exists and is independent of $\theta$. There is a unique measure $\mu_F$ on $\mathbb P\mathbb R^1$ satisfying
\[
\mu_F(A)=\sum_{i=1}^k p_i\mu_F(\phi_i(A)).
\]
The measure $\mu_F$ is called the Furstenberg measure and has been studied for example in \cite{BPS}. 

In our example of the previous section, the Furstenberg measure is a dirac mass on direction $\frac{-\pi}{2}$ corresponding to vertical projection, and the projection of $\mu$ in this direction gave rise to a measure whose properties are key to understanding $E_{\lambda}$. In our more general case of self affine sets $E$ without a `carpet' structure, $\mu_F$ will typically have positive dimension, and the properties of projections of $\mu$ in $\mu_F$-almost every direction will be crucial.

We say that a straight line is aligned in direction $\theta$ if it makes angle $\theta$ with the positive real axis.

For $\theta\in\mathbb P\mathbb R^1$ let $\pi_{\theta}: E\to [-1,1]$ denote orthogonal projection from $E$ onto the diameter of unit disc $X$ at angle $\theta$, followed by the linear map from this diameter to $[-1,1]$. We define the projected measure $\mu_{\theta}$ on $[-1,1]$ by 
\[
\mu_{\theta}:=\mu\circ\pi_{\theta}^{-1}.
\]

{\bf Projection Condition:} We say that $\mu$ satisfies our projection condition if for $\mu_F$ almost every $\theta\in\mathbb P\mathbb R^1$ the projected measure $\mu_{\theta}$ is absolutely continuous.

In \cite{FalKemp} it was shown that the Hausdorff, box and affinity dimensions of a self-affine set coincide if the natural Gibbs measure on $E$ satisfies this projection condition. Furthermore, we gave a class of self-affine sets corresponding to an open set in parameter space for which the projection condition is satisfied, these examples were born out of the observation that the projection condition holds whenever $\dim_H \mu_F+\dim_H \mu >2$, a condition which can often be shown to hold using rough lower bounds for $\dim_H \mu$ and $\dim_H \mu_F$.

B\'ar\'any, Pollicott and Simon also gave regions of parameter space such that, for almost every set of parameters in this region, the corresponding Furstenberg measure is absolutely continuous \cite{BPS}. Assuming absolute continuity of the Furstenberg measure, our projection condition holds whenever $\dim_H \mu >1$ by Marstrand's projection theorem \cite{HuntKaloshin,Marstrand}.

\section{The Sliced Scenery Flow}\label{SlicingSection}
As a warm up to the later results describing the scenery flow for self-affine measures, we begin by considering the scenery flow on slices through self-affine measures in directions $\theta$ in the support of $\mu_F$. The results of this section do not require any projection condition.

Given $\theta\in\mathbb P\mathbb R^1$, $x\in[-1,1]$, there exists a family $\mu_{\theta,x}$ of measures defined on the slices $E_{\theta,x}:=E\cap \pi_{\theta}^{-1}(x)$ such that for each Borel set $A\subset\mathbb R^2$ we have
\[
\mu(A)=\int_{[-1,1]} \mu_{\theta,x}(A\cap E_{\theta,x}) d\mu_{\theta}(x).
\]
The family of slice measures $\mu_{\theta,x}$ is called the disintegration of $\mu$. While the above equation does not uniquely define the family of measures $\mu_{\theta,x}$, any two disintegrations of $\mu$ differ on a set of $x$ of $\mu_{\theta}$-measure $0$. See \cite{Mat} for more information on disintegration of measures.

Slicing measures can also be viewed as the limits of measures supported on thin strips around the slice. Let $E_{\theta,x,\epsilon}$ denote the strip of width $\epsilon$ around the line $E_{\theta,x}$. Then for $\mu_F$ almost every $\theta$ and $\mu_{\theta}$ almost every $x$, for any word $a_1\cdots a_n$ we have 
\begin{equation}\label{SlicingStrips}
\mu_{\theta,x}(X_{a_1\cdots a_n}\cap E_{\theta,x})=\lim_{\epsilon\to 0} \frac{\mu(X_{a_1\cdots a_n}\cap E_{\theta,x,\epsilon})}{\mu(E_{\theta,x,\epsilon})}.
\end{equation}


Let $\Sigma^{\pm}:=\{1,\cdots,k\}^{\mathbb Z}$. Given $\underline a \in \Sigma^{\pm}$ we define the angle
\[
\rho(\underline a):=\lim_{n\to\infty} \phi_{a_0}\circ \phi_{a_{-1}}\circ\cdots\circ \phi_{a_{-n}}(\theta)
\]
for any $\theta\in \mathcal Q_2$. Then let $\overline{\pi}:\Sigma^{\pm}\to \Sigma\times \mathcal Q_2$ be given by
\[
\overline{\pi}(\underline a):=(a_1a_2a_3\cdots, \rho(\underline a)).
\]

We define a map $f:\Sigma\times \mathbb P\mathbb R^1\to \Sigma\times \mathbb P\mathbb R^1$ by
\[
f(\underline a,\theta)=(\sigma(\underline a),\phi_{a_1}(\theta))
\] 
where $\sigma$ is the left shift. 
\begin{prop}
The map $f$ preserves measure $\mu\times\mu_F$. Furthermore, the system $(\Sigma\times \mathbb P\mathbb R^1,f, \mu\times\mu_F)$ is ergodic.
\end{prop}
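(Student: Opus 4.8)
The measure-preservation is the routine half. The defining relation $\mu_F(A)=\sum_i p_i\mu_F(\phi_i(A))$ is precisely a statement that $\mu_F$ is stationary for the random walk driven by the $\phi_i$ with weights $p_i$, so I would check invariance on a generating semi-algebra of sets of the form $[j]\times B$, where $[j]=\{\underline a:a_1=j\}$ is a cylinder in $\Sigma$ and $B\subset\mathbb P\mathbb R^1$ is Borel. Computing $f^{-1}([j]\times B)$ and integrating against $\mu\times\mu_F$ should reproduce $p_j\,\mu_F(B)$ after one application of the stationarity identity summed over the first coordinate; since such products generate the product $\sigma$-algebra, invariance follows. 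This is essentially a one-line verification once the cylinder computation is set up.

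For ergodicity I would use the skew-product structure: $f$ fibres over the full shift $(\Sigma,\sigma,\mu)$, which is already ergodic (indeed Bernoulli, hence mixing), with a cocycle acting on the $\mathbb P\mathbb R^1$ fibre via the $\phi_{a_1}$. My approach is to invoke a standard ergodicity criterion for such skew products: it suffices to show that the only $f$-invariant $L^2$ functions are constant, and for a skew product over an ergodic base this reduces to showing the associated Markov operator $Pg(\theta)=\sum_i p_i\, g(\phi_i(\theta))$ on $L^2(\mu_F)$ has $1$ as a simple eigenvalue, i.e. its only fixed points are constants. Equivalently, I would show the Markov chain on $\mathbb P\mathbb R^1$ with transition $\theta\mapsto\phi_i(\theta)$ (probability $p_i$) and stationary measure $\mu_F$ is ergodic.

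The ergodicity of this projective chain is where I would lean on the contraction already established in the excerpt: because the matrices $A_i$ are strictly positive, the maps $\phi_i$ strictly contract the negative quadrant $\mathcal Q_2$, and the limit $\lim_n \phi_{a_1}\circ\cdots\circ\phi_{a_n}(\theta)$ exists independently of $\theta$. This uniform contraction means the chain couples: two trajectories started from different points in $\mathcal Q_2$ and driven by the same symbol sequence converge, so the chain has a unique stationary measure and, being a contracting iterated function system, is uniquely ergodic on its attractor. I would argue that any bounded $P$-harmonic function $g$ must be constant by evaluating $g(\theta)=\mathbb E[g(\phi_{a_1}\cdots\phi_{a_n}(\theta))]$ and letting $n\to\infty$, using the $\theta$-independence of the limit together with dominated convergence to conclude $g(\theta)$ does not depend on $\theta$. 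Feeding this back through the skew-product criterion yields ergodicity of $(\Sigma\times\mathbb P\mathbb R^1,f,\mu\times\mu_F)$.

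The main obstacle is the fibre ergodicity step, specifically justifying that no nonconstant invariant functions survive. The subtlety is that contraction gives a \emph{unique} stationary measure and pointwise convergence of harmonic functions, but passing from ``unique stationary measure'' to ``ergodicity of the skew product'' requires care: one must rule out invariant sets that are nontrivial in the fibre direction even though the base is ergodic, which is exactly what simplicity of the eigenvalue $1$ for $P$ encodes. I expect the clean route is to cite the standard theory of ergodicity for measure-preserving skew products over ergodic bases (the invariant functions descend to $P$-harmonic functions on the fibre), so that the whole argument rests on the contraction property of the $\phi_i$, which is already in hand.
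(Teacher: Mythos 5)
Your proposal is correct in outline, but it takes a genuinely different route from the paper. The paper disposes of both claims at once by exhibiting $(\Sigma\times\mathbb P\mathbb R^1,f,\mu\times\mu_F)$ as a \emph{factor of the two-sided Bernoulli shift} $(\Sigma^{\pm},\sigma,\mu)$: the factor map is $\overline{\pi}(\underline a)=(a_1a_2a_3\cdots,\rho(\underline a))$, where $\rho(\underline a)=\lim_{n\to\infty}\phi_{a_0}\circ\phi_{a_{-1}}\circ\cdots\circ\phi_{a_{-n}}(\theta)$ is determined by the past coordinates (this limit exists and is $\theta$-independent by the same contraction of $\mathcal Q_2$ that you use). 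The intertwining $\overline{\pi}\circ\sigma=f\circ\overline{\pi}$ is exactly the cocycle identity $\rho(\sigma\underline a)=\phi_{a_1}(\rho(\underline a))$; independence of past and future under the Bernoulli measure makes the pushforward of $\mu$ under $\overline{\pi}$ the \emph{product} $\mu\times\mu_F$, so invariance comes for free; and ergodicity passes to factors of the (ergodic) two-sided shift. Your argument instead works on the fibre: invariance checked on cylinders via stationarity of $\mu_F$, and ergodicity via the classical correspondence (Kakutani-type) between ergodicity of a skew product over a one-sided Bernoulli base and extremality of the stationary measure for the fibre Markov chain, with extremality supplied by uniqueness of the stationary measure for the contracting IFS. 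The paper's route buys brevity -- no Markov-operator theory at all, just "factors of ergodic systems are ergodic" -- at the cost of passing to the natural extension; your route avoids two-sided sequences and yields somewhat more refined by-products (uniqueness of $\mu_F$ as a stationary measure, triviality of bounded harmonic functions), and is the standard argument in the random-matrix-products literature.

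Two caveats on your fibre step. First, the harmonic function produced by averaging an $f$-invariant $L^2$ function over the base is only an $L^\infty(\mu_F)$ object, so your dominated-convergence argument (evaluating $g$ along $\phi_{a_1}\circ\cdots\circ\phi_{a_n}(\theta)$ and using $\theta$-independence of the limit) is not available as stated: it needs $g$ continuous at the limit points $\rho(\underline a)$. The clean fix is the one you half-identify: uniqueness of the stationary measure already gives extremality, hence ergodicity of the chain, and the skew-product correspondence (whose proof is a martingale argument, $\mathbb E[g\mid a_1,\ldots,a_n,\theta]=h(\phi_{a_n}\circ\cdots\circ\phi_{a_1}(\theta))\to g$ a.e.) then gives constancy of $g$ itself, not merely of its fibre average $h$. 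Second, watch the direction of the stationarity identity: invariance of $\mu\times\mu_F$ under $(\underline a,\theta)\mapsto(\sigma\underline a,\phi_{a_1}(\theta))$ requires $\mu_F(B)=\sum_i p_i\mu_F(\phi_i^{-1}(B))$, i.e.\ $\mu_F=\sum_i p_i(\phi_i)_*\mu_F$, which is what the distribution of the backward limit $\rho$ satisfies; make sure this is the reading of the displayed self-consistency relation that you feed into the cylinder computation.
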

This was proved in \cite{FalKemp}. The proof follows by observing that $\overline{\pi}$ is a continuous map which factors $(\Sigma^{\pm},\sigma,\mu)$ onto $(\Sigma\times\mathbb P\mathbb R^1,f,\mu\times\mu_F)$ and hence the ergodicity of $\sigma$ passes to the factor map $f$.

Our interest in the map $f$ stems from its relevance to scaling scenery. The following proposition is straightforward, and is proved in \cite{FalKemp}.

\begin{prop}
Let $L(\underline a,\theta)$ denote the line passing through the element of $E$ coded by $\underline a$ at angle $\theta$. Then the map $T_{a_1}^{-1}:\mathbb R^2\to \mathbb R^2$ maps the line $L(\underline a,\theta)$ to the line $L(f(\underline a,\theta))$.
\end{prop}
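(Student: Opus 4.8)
\section*{Proof proposal}

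The plan is to decompose the action of the affine map $T_{a_1}^{-1}$ into its effect on the base point of the line and its effect on the direction of the line, and to check that these two pieces of data match the two coordinates of $f(\underline a,\theta)=(\sigma(\underline a),\phi_{a_1}(\theta))$. Recall that $T_{a_1}(x)=A_{a_1}x+d_{a_1}$, so $T_{a_1}^{-1}$ is itself an affine map whose linear part is $A_{a_1}^{-1}$. Since any line is determined by a point it passes through together with its direction, and since affine maps send lines to lines, it suffices to track the base point and the direction separately.

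First I would verify the base point. By the definition of the coding map we have $\pi(\underline a)=\lim_{n\to\infty}T_{a_1\cdots a_n}(0)$, and writing $T_{a_1\cdots a_n}=T_{a_1}\circ T_{a_2\cdots a_n}$ and using the continuity of $T_{a_1}$ gives the self-affinity relation $\pi(\underline a)=T_{a_1}(\pi(\sigma(\underline a)))$. Applying $T_{a_1}^{-1}$ yields $T_{a_1}^{-1}(\pi(\underline a))=\pi(\sigma(\underline a))$, so $T_{a_1}^{-1}$ carries the base point of $L(\underline a,\theta)$ onto $\pi(\sigma(\underline a))$, which is exactly the base point determined by the first coordinate of $f(\underline a,\theta)$.

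Next I would verify the direction. Because an affine map transforms the direction of a line only through its linear part, the direction of $T_{a_1}^{-1}(L(\underline a,\theta))$ depends solely on $A_{a_1}^{-1}$. By the very definition of $\phi_{a_1}$ as the induced action of $A_{a_1}^{-1}$ on $\mathbb P\mathbb R^1$, a line in direction $\theta$ is sent to a line in direction $\phi_{a_1}(\theta)$. Combining the two computations, $T_{a_1}^{-1}(L(\underline a,\theta))$ is the line through $\pi(\sigma(\underline a))$ in direction $\phi_{a_1}(\theta)$, i.e. $L(\sigma(\underline a),\phi_{a_1}(\theta))=L(f(\underline a,\theta))$, as claimed.

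The argument is essentially bookkeeping, so I do not expect a serious obstacle; the only point requiring care is the compatibility of conventions. One must check that $\phi_{a_1}$, which was defined purely projectively as the action of $A_{a_1}^{-1}$ on directions through the origin, correctly describes the direction of the affine image of a line $L(\underline a,\theta)$ that does \emph{not} pass through the origin. This is legitimate precisely because affine maps preserve parallelism: translating $L(\underline a,\theta)$ to pass through the origin, applying the linear part $A_{a_1}^{-1}$, and translating back alters the base point but not the direction, and the middle step is exactly the one governed by $\phi_{a_1}$.
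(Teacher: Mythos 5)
Your proof is correct: the self-affinity relation $\pi(\underline a)=T_{a_1}(\pi(\sigma(\underline a)))$ handles the base point, the linear part $A_{a_1}^{-1}$ of $T_{a_1}^{-1}$ handles the direction via the definition of $\phi_{a_1}$, and your closing remark about parallelism is exactly the right compatibility check. The paper itself gives no proof here (it calls the proposition straightforward and defers to the cited reference \cite{FalKemp}), and your bookkeeping argument is precisely the standard one being deferred to.
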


Our main result of this section is the following.

\begin{theorem}\label{SliceExact}
Let $\mu$ be a Bernoulli measure on a self-affine set $E\subset\mathbb R^2$ associated to strictly positive matrices $A_i$ and satisfying our separation condition. Then there exists a constant $d$ such that for $\mu_F$-almost every $\theta\in\mathbb P\mathbb R^1$ and $\mu_{\theta}$-almost every $x\in[-1,1]$ the slice measure $\mu_{\theta,x}$ is exact dimensional with dimension $d$.
\end{theorem}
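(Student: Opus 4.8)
The plan is to establish exact dimensionality of the slice measures by exhibiting them as the fibre measures of a suitable ergodic skew-product system, and then applying a dynamical version of the Shannon--McMillan--Breiman theorem together with the Volume Lemma. First I would fix the ergodic system $(\Sigma\times\mathbb P\mathbb R^1, f, \mu\times\mu_F)$ from the earlier proposition and observe, using the second proposition, that the maps $T_{a_1}^{-1}$ act compatibly on the lines $L(\underline a,\theta)$, so that applying the sequence $T_{a_1\cdots a_n}^{-1}$ transports the slice geometry at $(\underline a,\theta)$ to the slice geometry at $f^n(\underline a,\theta)$. The key point is that the nested ellipses $X_{a_1\cdots a_n}$ intersect a fixed slice $E_{\theta,x}$ in intervals whose lengths contract at a rate governed by the \emph{singular values} of $A_{a_1\cdots a_n}$, and more precisely by the rate at which the line in direction $\rho$ is contracted. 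Because the matrices are strictly positive, the second proposition guarantees that the relevant contraction direction stabilises to $\rho(\underline a)$, so the contraction rate along the slice is, up to bounded multiplicative error, the norm of $A_{a_1\cdots a_n}$ restricted to the stable line, i.e. $\|A_{a_1\cdots a_n}|_{L}\|$.

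Next I would set up the relevant entropy and Lyapunov quantities. Let $\chi_1\geq\chi_2$ denote the two Lyapunov exponents of the matrix cocycle $A_{a_1\cdots a_n}$ with respect to $\mu\times\mu_F$, which exist and are $(\mu\times\mu_F)$-almost everywhere constant by the Furstenberg--Kesten theorem and ergodicity. The cellular approximation \eqref{SlicingStrips} lets me write the slice mass $\mu_{\theta,x}(X_{a_1\cdots a_n}\cap E_{\theta,x})$ as a ratio of strip masses, and the plan is to compute the almost-sure logarithmic decay rate of this mass. The numerator $\mu(X_{a_1\cdots a_n})$ decays like $e^{-nh}$ where $h$ is the entropy of $\mu$ (by Shannon--McMillan--Breiman for the Bernoulli measure, $h=-\sum p_i\log p_i$), while the normalising strip mass $\mu(E_{\theta,x,\epsilon})$ and the conditional measures relate to the projected measure $\mu_\theta$, whose own scaling is controlled by the larger Lyapunov exponent $\chi_1$. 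Carrying out this bookkeeping, I expect the local dimension of $\mu_{\theta,x}$ at $\mu_{\theta,x}$-almost every point to equal a constant of the shape
\[
d=\frac{h-\dim(\mu_\theta)\,\chi_1}{\chi_2},
\]
or some equivalent combination of the entropy, the two Lyapunov exponents, and the (almost surely constant, by the earlier results) dimension of the typical projection; the essential outcome is that all the ingredients are $(\mu\times\mu_F)$-almost everywhere constant, which forces $d$ to be constant.

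The main obstacle, and the step requiring the most care, is justifying that the local scaling of the slice measure along the slice direction is genuinely governed by a single well-defined contraction rate, uniformly enough to apply an ergodic-theoretic limit theorem fibrewise. The difficulty is twofold: first, the ellipses $X_{a_1\cdots a_n}$ are not aligned with the slice, so I must control the \emph{geometry of the intersection} $X_{a_1\cdots a_n}\cap E_{\theta,x}$ and show that its diameter comparison to $\|A_{a_1\cdots a_n}|_L\|$ holds with multiplicative constants that are bounded along $\mu$-typical codes. Here the strict positivity of the matrices and the resulting uniform contraction of the cone $\mathcal Q_2$ under the $\phi_i$ are essential, since they give exponential convergence of the line directions and hence uniform transversality estimates. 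Second, I must upgrade the almost-everywhere convergence of the cellular ratios in \eqref{SlicingStrips} to a statement about local dimension that holds for $\mu_\theta$-almost every $x$ and $\mu_{\theta,x}$-almost every point simultaneously; this is a Fubini/disintegration argument that I would handle by integrating the fibrewise local-dimension identity against $\mu_\theta$ and invoking the Rokhlin disintegration framework referenced earlier. Once these geometric comparisons are in place, the exactness and the constancy of $d$ follow from the ergodicity of $f$ applied to the additive cocycle $\log\mu_{\theta,x}(X_{a_1\cdots a_n}\cap E_{\theta,x})$ via the subadditive (or Kingman) ergodic theorem.
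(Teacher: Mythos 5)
Your route is genuinely different from the paper's, and this is worth noting first: the paper never estimates local dimensions of slices at all. It proves (Theorem \ref{SliceScaling}) that the sliced scenery flow is a factor, via $((\underline a,\theta),t)\mapsto\mu_{\theta,\underline a,t}$, of the ergodic suspension flow over $(\Sigma\times\mathbb P\mathbb R^1,f,\mu\times\mu_F)$ with roof function $r=r_2-r_1$, so that for $\mu_F$-a.e.\ $\theta$ and $\mu$-a.e.\ $\underline a$ the slice measure is uniformly scaling and generates one fixed ergodic fractal distribution $P$; exact dimensionality with an almost surely constant dimension is then quoted from Proposition 1.19 of \cite{HochmanDynamics}. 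All of the analytic difficulty is absorbed into that result of Hochman, and no identification of the value of $d$ is ever attempted. Your plan instead computes the local dimension directly, Ledrappier--Young style, and the step you describe as ``carrying out this bookkeeping'' is precisely where the proposal breaks down.

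Concretely: equation \eqref{SlicingStrips} writes $\mu_{\theta,x}(X_{a_1\cdots a_n}\cap E_{\theta,x})$ as $\lim_{\epsilon\to0}\mu(X_{a_1\cdots a_n}\cap E_{\theta,x,\epsilon})/\mu(E_{\theta,x,\epsilon})$, and your claimed asymptotics ($e^{-nh}$ from Shannon--McMillan--Breiman times a factor governed by $\chi_1$ through $\dim\mu_\theta$) require comparing strip masses $\mu(E_{\theta',x',\epsilon'})$ and $\mu(E_{\theta,x,\epsilon})$ at different base points, directions and scales. Exact dimensionality of $\mu_\theta$ --- even if you had it --- controls each such quantity only up to a factor $\epsilon^{o(1)}$ at fixed $n$; ratios of measures of comparable balls can oscillate arbitrarily for merely exact dimensional measures, so these errors are not controlled as $n\to\infty$. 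The rigorous substitute is to use the self-affinity of the disintegration to get an exact additive cocycle
\[
-\log\mu_{\theta,x}\bigl(X_{a_1\cdots a_n}\cap E_{\theta,x}\bigr)=\sum_{k=0}^{n-1}g\bigl(f^k(\underline a,\theta)\bigr),\qquad g(\underline a,\theta):=-\log\mu_{\theta,\pi_\theta(\underline a)}\bigl(X_{a_1}\cap E_{\theta,\pi_\theta(\underline a)}\bigr),
\]
apply Birkhoff (not Kingman: you never verify subadditivity of your sequence, and with the exact relation none is needed), and then (i) prove $g$ is integrable and (ii) identify $\int g\,d(\mu\times\mu_F)$ with $h-\chi_1\dim\mu_\theta$. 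Step (ii) is the Ledrappier--Young entropy identity for self-affine measures (B\'ar\'any--K\"aenm\"aki), a substantial theorem in its own right, and it presupposes what you also assume without proof: that the projections $\mu_\theta$ are exact dimensional with $\mu_F$-a.s.\ constant dimension. Neither fact is available here --- the whole point of Section \ref{SlicingSection} is that nothing is assumed about the projections $\mu_\theta$. (You would additionally need a balls-versus-cylinders comparison, using the strong separation and the bounded ratios $\alpha_2(a_1\cdots a_{n+1})/\alpha_2(a_1\cdots a_n)$, to pass from cylinder asymptotics to genuine local dimension.) So as written there is a genuine gap: either fill it with the full Ledrappier--Young machinery, or follow the paper and let the suspension-flow factor map together with Hochman's theorem do the work.
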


The corresponding result for slices through non-overlapping self-similar measures was proved by Hochman and Shmerkin \cite{HochmanDynamics}, and this was extended to the overlapping case by Falconer and Jin \cite{FalconerJin}.

We stress again that no condition on projections of the measure $\mu$ is required in this section. The above result is an immediate corollary of the following theorem, which describes the scenery flow for the slice measures $\mu_{\theta,x}$ centered at points $\underline a \in E$ with $\pi_{\theta}(\underline a)=x$. The constant $d$ is the metric entropy of this flow. 

Let $L(\underline a,\theta,t)$ denote the line at angle $\theta$, centred at $\underline a$ and of length $e^{-t}$. Let $\mu_{\theta,\underline a,t}$ denote the measure $\mu_{\theta,\pi_{\theta}(\underline a)}$ restricted to the line $L(\underline a,\theta,t)$, linearly rescaled onto $[-1,1]$ and renormalised to have mass $1$. 

\begin{theorem}\label{SliceScaling}
There exists an ergodic fractal distribution $P$ on the space of Borel probability measures on $[-1,1]$ such that for $\mu_F$ almost every $\theta\in\mathbb P\mathbb R^1$, for $\mu$ almost every $\underline a \in \Sigma$ we have
\[
\lim_{T\to\infty}\frac{1}{T}\int_0^T \delta_{\mu_{\theta,\underline a,t}} dt \to P.
\]
\end{theorem}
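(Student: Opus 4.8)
The plan is to transfer the scenery flow on the slice measures into a suspension flow over the ergodic base system $(\Sigma\times\mathbb P\mathbb R^1, f, \mu\times\mu_F)$ supplied by the first proposition of this section, and then read off convergence of the scenery distribution from the Birkhoff ergodic theorem. The geometric engine is the second proposition: the inverse map $T_{a_1}^{-1}$ carries the line $L(\underline a,\theta)$ to the line $L(f(\underline a,\theta))$, and by the self-affinity of $\mu$ it carries the conditional measure $\mu_{\theta,\pi_\theta(\underline a)}$ (up to normalisation) to $\mu_{\phi_{a_1}(\theta),\pi_{\phi_{a_1}(\theta)}(\sigma\underline a)}$. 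Iterating, $T_{a_1\cdots a_n}^{-1}$ identifies the restriction of the slice measure through $\underline a$ at angle $\theta$ in the cylinder $X_{a_1\cdots a_n}$ with an affine copy of the slice measure through $\sigma^n(\underline a)$ at angle $\phi_{a_1}\circ\cdots\circ\phi_{a_n}(\theta)=\rho$-shifted direction. The existence of the coherent family $\mu_{\theta,x}$ and the strip-approximation \eqref{SlicingStrips} guarantee these restrictions behave well under the limit; the strong separation condition ensures each small line segment $L(\underline a,\theta,t)$ sits inside a unique level-$n$ cylinder so the symbolic bookkeeping is unambiguous.

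First I would fix a roof function. For $(\underline a,\theta)$ the map $T_{a_1}^{-1}$ stretches lengths along direction $\theta$ by a factor whose logarithm I call $r(\underline a,\theta)=\log\|A_{a_1}^{-1}|_\theta\|$; this is the instantaneous expansion rate of the one-dimensional dynamics along the slice. Because the matrices are strictly positive and of norm less than one, $r$ is bounded and bounded below by a positive constant, and $\int r\,d(\mu\times\mu_F)=:\gamma>0$ is the relevant Lyapunov exponent. Build the suspension of $f$ under $r$ and define a flow $\tilde S_t$ on the suspension whose fibre coordinate records the rescaled slice measure $\mu_{\theta,\underline a,s}$ at the appropriate intermediate time $s$. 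The second step is to verify the cocycle/commutation identity $\mu_{\theta,\underline a,t}=$ (affine image of) $\mu_{\phi_{a_1}(\theta),\sigma\underline a,\,t-r(\underline a,\theta)}$ for $t$ exceeding the first return time, so that the scenery flow $t\mapsto\mu_{\theta,\underline a,t}$ is genuinely conjugate to $\tilde S_t$ on the suspension. Then the third step is purely ergodic-theoretic: apply the Birkhoff theorem to the suspension flow, which is ergodic because the base $f$ is ergodic and $r$ is not cohomologous to a constant multiple of a lattice (or, if it is, one handles the lattice case by hand), to conclude that the time averages $\frac1T\int_0^T\delta_{\mu_{\theta,\underline a,t}}\,dt$ converge for $(\mu\times\mu_F)$-almost every $(\underline a,\theta)$, hence by Fubini for $\mu_F$-almost every $\theta$ and $\mu$-almost every $\underline a$, to the push-forward $P$ of the suspension's invariant measure under the fibre map. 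Finally I would check the quasi-Palm and ergodicity properties that upgrade $P$ to an ergodic fractal distribution: ergodicity of $P$ descends from ergodicity of $\tilde S_t$, and the quasi-Palm property follows because recentring the slice measure at a typical interior point corresponds to advancing the flow, which preserves the suspension measure.

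The main obstacle I expect is the matching between the continuous zooming parameter $t$ and the discrete symbolic time $n$ governed by the varying expansion factors $\log\|A_{a_1\cdots a_n}^{-1}|_\theta\|$. Unlike the self-similar case, these factors depend on $\theta$ and accumulate non-uniformly, so the segment $L(\underline a,\theta,t)$ of a fixed small length does not correspond to a fixed level of the cylinder decomposition; the rate $r(\underline a,\theta)$ is genuinely $\theta$-dependent, and one must show the resulting cocycle is integrable and that the error between the continuous time $t$ and the accumulated discrete expansion $\sum_{j<n}r(f^j(\underline a,\theta))$ stays bounded uniformly enough to pass to Cesàro limits. This is exactly the point where the strict positivity of the matrices pays off, giving uniform cone-contraction on $\mathcal Q_2$ and hence uniform comparability of the directional norms $\|A_{a_1\cdots a_n}^{-1}|_\theta\|$ across $\theta\in\mathrm{supp}(\mu_F)$; controlling this distortion so that the suspension flow is well defined and its time averages coincide with the raw scenery averages is the heart of the argument. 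Once this comparability is in hand, the convergence of the scenery distribution and the identification of $d$ in Theorem \ref{SliceExact} as the metric entropy $\gamma\cdot(\text{fibre entropy})$ of $\tilde S_t$ follow by now-standard ergodic-theoretic arguments for scenery flows.
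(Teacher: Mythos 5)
Your proposal is correct and takes essentially the same route as the paper: the paper also establishes the cocycle identity $\mu_{\theta,\underline a,r_2(\underline a,\theta)}=\mu_{\phi_{a_1}(\theta),\sigma(\underline a),r_1(\sigma(\underline a),\phi_{a_1}(\theta))}$ via the strip approximation, builds the suspension flow over $(\Sigma\times\mathbb P\mathbb R^1,f,\mu\times\mu_F)$ with roof $r=r_2-r_1$, and obtains $P$ as the push-forward of the suspension measure under the fibre map $((\underline a,\theta),t)\mapsto\mu_{\theta,\underline a,t}$, with convergence from the ergodic theorem. The only differences are cosmetic: your roof $\log\|A_{a_1}^{-1}|_\theta\|$ differs from the paper's $r_2-r_1$ by a coboundary (so the suspensions are isomorphic), and your caveat about lattice-valued roofs is unnecessary, since a suspension flow over an ergodic base is ergodic for any positive integrable roof function; the cohomology condition only matters for mixing properties, not for the Birkhoff averages used here.
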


Theorem \ref{SliceScaling} implies Theorem \ref{SliceExact} by a result of Hochman, see Proposition 1.19 of \cite{HochmanDynamics}. We prove Theorem \ref{SliceScaling}.
\begin{proof}
First we need to verify that the self-affinity realtion for the measures $\mu$ carries over to a corresponding relationship between the measures $\mu_{\theta,\underline a,t}$. 
Given a point $(\underline a,\theta)$ we let
\[
r_1(\underline a,\theta):=\inf\{t: L(\underline a,\theta,t)\subset X\}.
\]
Further, we let
\[
r_2(\underline a,\theta):=\inf\{t: L(\underline a,\theta,t)\subset X_{a_1}\}.
\]

Using equation \ref{SlicingStrips} and noting that
\[
T_{a_1}^{-1}(X_{a_1\cdots a_n}\cap E_{\theta,\pi_{\theta}(\underline a),\epsilon})=X_{a_2\cdots a_n}\cap E_{\phi_{a_1}(\theta),\pi_{\phi_{a_1}(\theta)}(\sigma(a)),\delta(\epsilon)}
\]
for some $\delta(\epsilon)>0$ which tends to zero as $\epsilon\to 0$, we see that
\[
\mu_{\theta,\underline a,r_2(\underline a,\theta)}=\mu_{\phi_{a_1}(\theta), \sigma(\underline a), r_1(\sigma(\underline a),\phi_{a_1}(\theta))}.
\]

The above equation says that, just as pieces of the slice through $\underline a$ at angle $\theta$ are mapped onto pieces of the slice through $\sigma(\underline a)$ at angle $\phi_{a_1}(\theta)$ by the map $T_{a_1}^{-1}$, so we can map pieces of the sliced measure onto their corresponding preimage. In particular, it allows us to understand the dynamics of zooming in on the slice measure $\mu_{\theta,\pi_{\underline a}}$ around $\underline a$ by relating small slices around $\underline a$ to larger slices around $\sigma(\underline a)$. 
We build a suspension flow that encapsulates these dynamics. 


Let roof function $r:\Sigma\times\mathbb P\mathbb R^1$ be given by $r(\underline a,\theta)=r_2(\underline a,\theta)-r_1(\underline a,\theta)$. This is the time taken to flow under $\phi$ from the line passing through $\underline a$ at angle $\theta$ and just touching the boundary of $X$ to the line centred at $\underline a,$ angle $\theta$, touching the boundary of $X_{a_1}$. 

Finally we let the flow $\psi$ be the suspension flow over the system $(\Sigma\times \mathbb P\mathbb R^1, f)$ with roof function given by $r$. That is, we define the space
\[
Z_r:=\{((\underline a,\theta),t):\underline a\in\Sigma, \theta\in \mathbb P\mathbb R^1, 0\leq t\leq r(\underline a,\theta)\}
\]
where the points $((\underline a,\theta),r(\underline a,\theta))$ and $(f(\underline a,\theta), 0)$ are identified, and let the flow $\psi_s:Z_r\to Z_r$ be given by
\[
\psi_s((\underline a,\theta),t):= ((\underline a,\theta),s+t)
\]
for $s+t\leq r(\underline a,\theta)$, extending this to a flow for all positive time $s$ by using the identification \[((\underline a,\theta), r(\underline a,\theta))=(f(\underline a,\theta), 0).\] 

We have already noted that the measure $\mu\times\mu_F$ is $f$-invariant and ergodic. This gives rise to a $\psi_s$-invariant, ergodic measure $\nu$ on $Z_r$ given by
\[
\nu=(\mu\times\mu_F\times\mathcal L)|_{Z_r}
\]
where $\mathcal L$ denotes Lebesgue measure.

There is an obvious factor map $F$ from $Z_r$ to the space of Borel probability measures on $[-1,1]$ given by letting

\[
F((\underline a,\theta),t)=\mu_{\underline a,\theta,t}.
\]

We have
\[
F(\psi_s(\underline a,\theta,t))=\mu_{\underline a,\theta,t+s}
\]
and thus we see that for $\mu\times \mu_F$ almost every pair $(\underline a,\theta)$ we have that the scenery flow on the measure $\mu_{\theta,\underline a,1}$ generates the ergodic fractal distribution $P=\nu\circ F^{-1}$. This completes the proof of Theorem \ref{SliceScaling} and hence of Theorem \ref{SliceExact}.



\end{proof}

In essence, one can combine the work of sections \ref{PrzSection} and \ref{SlicingSection} to give all the intuition needed to describe the scenery flow for the self-affine measures we consider. What follows, which is occasionaly quite technical, verifies that this intuition is correct.



\section{Dilating Ellipses and a Related Flow}

Before describing the scenery flow, we describe a map from the space of measures on large ellipses to probability measures on $X$. This map plays the role of the map $D$ of Observation 2, and will allow us to approximate the scenery flow on $\mu$ arbitrarily well.

Given $\underline a\in\Sigma,$ $\theta\in\mathbb P\mathbb R^1$, $r_1, r_2>0$ we let the ellipse $Y_{\underline a,\theta,r_1,r_2}$ be the ellipse centred at $\pi(\underline a)$, with long axis of length $2e^{-r_1}$ aligned in direction $\theta$ and with short axis of length $2e^{-r_2}$.

Given $(\underline a,\theta,r_1,r_2)$ such that $Y_{\underline a,\theta,r_1,r_2}\not\subset X_{a_1}$, we let $D_{\underline a,\theta,r_1,r_2}:Y_{\underline a,\theta,r_1,r_2}\to X$ be the bijection which maps the major axis of $Y_{\underline a,\theta,r_1,r_2}$ to $\{0\}\times[-1,1]$ and the minor axis of $Y_{\underline a,\theta,r_1,r_2}$ to $[-1,1]\times\{0\}$.

Let $D_{\underline a,\theta,r_1,r_2}$ also denote the analagous map which maps finite measures on ellipses $Y_{\underline a,\theta,r_1,r_2}$ to probability measures on $X$. As in observation $3$ of Section \ref{PrzSection}, we consider what happens to the family of dilated measures as the minor axis of an ellipse shrinks. 

\begin{lemma}\label{DilatingLemma}
Let $\underline a\in\Sigma, \theta\in\mathbb P\mathbb R^1, r_1>0$. Suppose that for $\mu_{\underline a,\theta,r_1}$ almost every $\underline b\in\Sigma$ we have that there exist infinitely many $n\in\mathbb N$ such that the projection of $\mu|_{X_{b_1\cdots b_n}}$ in direction $\theta$ is absolutely continuous, and that the scenery flow on this projected measure centred at $\pi_{\theta}(\underline b)$ converges to Lebesgue measure. Then we have
\[
\lim_{r_2\to \infty} D_{\underline a, \theta, r_1,r_2}(\mu|_{Y(\underline a, \theta, r_1,r_2)})=\frac{(\mathcal L\times \mu_{\underline a,\theta,r_1})|_{X}}{(\mathcal L\times \mu_{\underline a,\theta,r_1})(X)}.
\]
\end{lemma}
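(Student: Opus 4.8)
The plan is to disintegrate the restricted measure $\mu|_{Y(\underline a,\theta,r_1,r_2)}$ along the direction $\theta$ and to show that, under the dilation $D_{\underline a,\theta,r_1,r_2}$, the coordinate along the major axis (direction $\theta$) converges to the slice measure $\mu_{\underline a,\theta,r_1}$, while the coordinate along the minor (transverse) axis, which is being stretched by the large factor $e^{r_2}$, converges to Lebesgue measure. This mirrors Observation 3 of Section \ref{PrzSection}, where the limiting measure was a slice crossed with Lebesgue measure. Throughout I write $(u,v)\in X$ for the image coordinates under $D_{\underline a,\theta,r_1,r_2}$, with $u$ the stretched transverse coordinate (image of the minor axis) and $v$ the coordinate along $\theta$ (image of the major axis).

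First I would disintegrate the measure $D_{\underline a,\theta,r_1,r_2}(\mu|_Y)$ over its $v$-coordinate. As $r_2\to\infty$ the ellipse $Y$ degenerates to a thin neighbourhood of the central slice $L(\underline a,\theta,r_1)$, so by the strip characterisation of slice measures (equation \ref{SlicingStrips}) the $v$-marginal converges to $\mu_{\underline a,\theta,r_1}$, reweighted by the chord length $2\sqrt{1-v^2}$ arising from the elliptical rather than rectangular geometry. This reweighting is exactly the one produced by restricting the product measure to the disc $X$, and tracking it is a routine piece of deterministic geometry. The heart of the argument is the conditional transverse direction: I would fix a point $\underline b$ on the central slice, distributed according to $\mu_{\underline a,\theta,r_1}$, and examine the conditional distribution of $D(\mu|_Y)$ in the $u$-coordinate, given that $v$ lies near the $\theta$-position of $\underline b$. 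Near $\underline b$, at transverse scale $e^{-r_2}$, the measure $\mu$ is governed by the cylinders meeting this window; once $r_2$ is large enough that $Y$ is transversally contained in the cylinder $X_{b_1\cdots b_n}$ through $\underline b$ (using the separation condition), this conditional distribution is a rescaled copy of the projection $\pi_\theta(\mu|_{X_{b_1\cdots b_n}})$. By hypothesis this projection is absolutely continuous for infinitely many $n$ and its scenery flow centred at $\pi_\theta(\underline b)$ converges to Lebesgue measure, so for $\mu_{\underline a,\theta,r_1}$-almost every $\underline b$ the conditional transverse distribution converges to Lebesgue measure as $r_2\to\infty$; the role of Lemma \ref{ACFlow} is precisely to guarantee that such density points $\underline b$ are typical.

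Since the conditional transverse limit is Lebesgue measure for almost every $\underline b$, and this limit does not depend on $\underline b$, integrating the conditional limits against the limiting $v$-marginal $\mu_{\underline a,\theta,r_1}$ yields the normalised product $(\mathcal L\times\mu_{\underline a,\theta,r_1})|_X$. To justify interchanging the limit in $r_2$ with the integral over $\underline b$ I would invoke Egorov's theorem exactly as in Observation 2, upgrading the almost-everywhere convergence of the conditional transverse distributions to uniform convergence on a set of $\underline b$ of measure arbitrarily close to one; boundedness of all the measures involved then delivers weak$^*$ convergence of the joint distributions to the product. It suffices to test against a dense family of product functions $g_1(u)g_2(v)$, for which the decoupling into the transverse limit and the $v$-marginal limit is transparent.

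I expect the main obstacle to be the identification carried out in the second paragraph: matching the continuous transverse dilation scale $e^{-r_2}$ with the discrete cylinder levels $n$ at which absolute continuity is available, and verifying that the conditional transverse distribution of $\mu|_Y$ near $\underline b$ really is asymptotically a rescaled copy of $\pi_\theta(\mu|_{X_{b_1\cdots b_n}})$, rather than being contaminated by neighbouring cylinders. Controlling this requires the separation condition together with the observation that once one absolutely continuous level $n$ has been reached, the asserted convergence of the scenery flow governs all finer scales, so that the hypothesis of infinitely many such $n$ is enough to force convergence along the entire continuum $r_2\to\infty$.
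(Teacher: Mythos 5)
Your proposal follows essentially the same route as the paper's proof: the paper likewise splits $D_{\underline a,\theta,r_1,r_2}(\mu|_{Y})$ into the distribution along the $\theta$-direction, which converges to the slice measure by the strip characterisation (equation \ref{SlicingStrips}), and the transverse distribution, which it identifies---after noting that the dilated cylinders $D_{\underline a,\theta,r_1,r_2}(X_{b_1\cdots b_m}\cap Y)$ become nearly horizontal strips contained in rows of a grid---with the time-$r_2$ scenery flow on $\pi_{\theta}(\mu|_{X_{b_1\cdots b_m}})$ centred at $\pi_{\theta}(\underline b)$, hence convergent to Lebesgue measure by the hypothesis together with Lemma \ref{ACFlow}. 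Your chord-length reweighting and the Egorov/product-test-function bookkeeping are just a more explicit packaging of what the paper does with grid squares in the Prokhorov metric, so the two arguments are essentially identical.
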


\begin{proof}
Our notion of convergence here is that of the Prokhorov metric. It is enough to show that, for all $N\in\mathbb N$, we can divide the unit square into a grid of $2N+1$ squares $A_{i,j}$ of equal side length and have that for each $i,j\in\{-N,\cdots,N\}$ such that $A_{i,j}\subset X$,
\[
 \left(D_{\underline a, \theta, r_1,r_2}(\mu|_{Y(\underline a, \theta, r_1,r_2)})\right)(A_{i,j})\to\frac{(\mathcal L\times \mu_{\underline a,\theta,r_1})|_{X}}{(\mathcal L\times \mu_{\underline a,\theta,r_1})(X)}(A_{i,j})
\]

First we consider squares $A_{0,j}$ whose $x$-coordinate is at the origin. Since slicing measures are almost surely the limit of the measures $\mu$ restricted to a thin strip around the slice, we have that the relative distribution of mass within the squares $A_{0,j}$ converges to the slicing measure $\mu_{\underline a,\theta,r_1}$ as $r_2\to \infty$ (given $\theta$ this holds for $\mu$-almost every $\underline a$.

Now we fix $j$ and consider the horizontal distribution of mass in \[
\frac{\mu|_{Y(\underline a, \theta, r_1,r_2)}\left( D_{\underline a, \theta, r_1,r_2}^{-1}(A_{i,j})\right)}{\mu(Y(\underline a,\theta,r_1,r_2))}\] for $i$ varying. 

We note that $Y(\underline a, \theta, r_1,r_2)$ intersects various ellipses. The ellipses $X_{b_1\cdots b_m}$ are separated, and as $r_2\to \infty$ the angle of the strip
\[
D_{\underline a, \theta, r_1,r_2}(X_{b_1\cdots b_m}\cap Y(\underline a,\theta,r_1,r_2))
\]
tends to the horizontal (indeed, any line which is not in direction $\theta$ gets pulled towards the horizontal by $D_{\underline a, \theta, r_1,r_2}$, and as $r_2\to \infty$ this effect becomes ever more pronounced). For $m$ large enough, each strip $D_{\underline a, \theta, r_1,r_2}(X_{b_1\cdots b_m}\cap Y(\underline a,\theta,r_1,r_2))$ is contained within the horizontal rectangle $\cup_{i=-N}^N A_{i,j}$ for some $j\in\{-N,\cdots,N\}$.
\begin{figure}[h]
\centering
\includegraphics[width=90mm,angle=90]{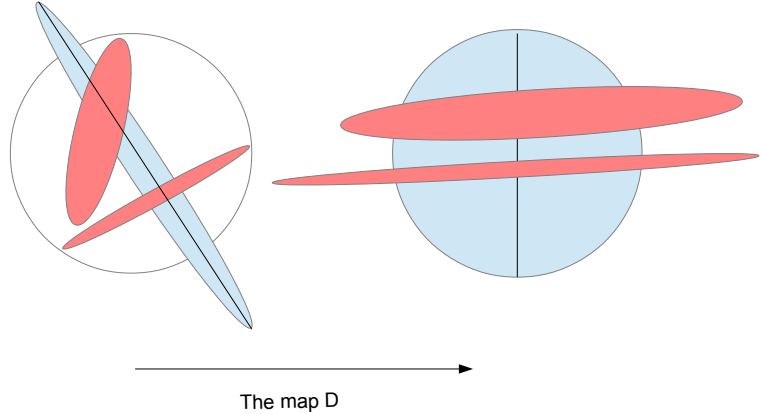}
\caption{The action of $D$ on an ellipse $Y$.}
\end{figure}


We want to understand the distribution of mass horizontally within the rectangles $\cup_{i=-N}^N A_{i,j}$. 

We will consider the projection of $\mu|_{X_{b_1\cdots b_m}}$ in direction $\theta$, intersected with $Y(\underline a,\theta,r_1,r_2)$, dilated to be supported on $[-1,1]$ and normalised to have mass $1$. This is just the scenery flow on the projection of $\mu|_{X_{b_1\cdots b_m}}$ in direction $\theta$, centred at $\pi_{\theta}(\underline b)$ at time $r_2$. 

Now we assumed that $\pi_{\theta}(\mu|_{X_{b_1\cdots b_m}})$ was absolutely continuous with positive density at $\pi_{\theta}(\underline b)$. Then by lemma \ref{ACFlow} this scenery flow converges to Lebesgue measure. Thus the horizontal distribution of mass within the rectangles $\cup_{i=-m(j)}^m(j) A_{i,j}$ converge to Lebesgue measure as $r_2\to \infty$ for all $j$, where $m(j)$ is the largest natural number such that $A_{m(j),j}\subset X$. Then we are done. 
\end{proof}
This yields the following corollary.
\begin{cor}\label{CorCondition}
Suppose that our projection condition holds, i.e. that the projected measure $\mu_{\theta}$ is absolutely continuous for $\mu_F$ almost every $\theta\in \mathbb P\mathbb R^1$. Then for $\mu$-almost every $\underline a$, $\mu_F$ almost every $\theta$ and all $r_1$ we have
\[
\lim_{r_2\to \infty}D_{\underline a, \theta, r_1,r_2}(\mu|_{Y(\underline a, \theta, r_1,r_2)})=\frac{(\mathcal L\times \mu_{\underline a,\theta,r_1})|_{X}}{(\mathcal L\times \mu_{\underline a,\theta,r_1})(X)}.
\]
\end{cor}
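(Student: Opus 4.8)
The plan is to obtain the corollary as a direct application of Lemma \ref{DilatingLemma}: I will verify, for every $\theta$ in the full $\mu_F$-measure set $G$ on which $\mu_\theta$ is absolutely continuous, and for $\mu$-almost every $\underline a$, that the hypothesis of that lemma holds, namely that for $\mu_{\underline a,\theta,r_1}$-almost every $\underline b$ there are infinitely many $n$ for which $\pi_\theta(\mu|_{X_{b_1\cdots b_n}})$ is absolutely continuous and its scenery flow centred at $\pi_\theta(\underline b)$ converges to Lebesgue measure. In fact I expect to establish both properties for \emph{all} $n$ at once, which is more than enough.

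For absolute continuity I would argue by domination and avoid rotating the projection direction altogether. Because the separation condition makes the level-$n$ ellipses $X_{b_1\cdots b_n}$ pairwise disjoint, $\mu|_{X_{b_1\cdots b_n}}$ is just the restriction of $\mu$ to the corresponding cylinder, so $\mu|_{X_{b_1\cdots b_n}}\le\mu$ and hence, projecting in the same direction $\theta$, $\pi_\theta(\mu|_{X_{b_1\cdots b_n}})\le\mu_\theta$. Since a measure dominated by an absolutely continuous measure is absolutely continuous, and $\mu_\theta$ is absolutely continuous for $\theta\in G$, the projected piece $\pi_\theta(\mu|_{X_{b_1\cdots b_n}})$ is absolutely continuous for every $n$.

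For the pointwise convergence I would set $\nu_w:=\pi_\theta(\mu|_{X_w})$ for each finite word $w=b_1\cdots b_n$. By the previous step $\nu_w$ is absolutely continuous, so Lemma \ref{ACFlow} supplies a Borel set $B_w\subset[-1,1]$ with $\nu_w(B_w)=0$ off which the scenery flow of $\nu_w$ converges to Lebesgue measure (at such points the density of $\nu_w$ is automatically positive, which is what the proof of Lemma \ref{DilatingLemma} uses). Since $\nu_w(B_w)=\mu|_{X_w}(\pi_\theta^{-1}(B_w))=0$ and the ellipses $X_w$ with $|w|=n$ are disjoint, the set $\{\underline b:\pi_\theta(\underline b)\in B_{b_1\cdots b_n}\}$ is $\mu$-null for each $n$, and taking the countable union over $n$ produces a single $\mu$-null set $\mathrm{Bad}$ off which, for every $n$, the scenery flow of $\pi_\theta(\mu|_{X_{b_1\cdots b_n}})$ centred at $\pi_\theta(\underline b)$ converges to Lebesgue measure.

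The delicate point, and the step I would flag as the main obstacle, is transferring this global $\mu$-almost-everywhere statement to the slice measures, because on a fixed slice $E_{\theta,x}$ the coordinate $\pi_\theta(\underline b)\equiv x$ is constant, so one really needs $x$ itself to be a good point for the cylinders $\nu_{b_1\cdots b_n}$ realised along that slice. This is exactly what the disintegration $\mu=\int \mu_{\theta,x}\,d\mu_\theta(x)$ delivers: a $\mu$-null set is $\mu_{\theta,x}$-null for $\mu_\theta$-almost every $x$, so $\mu_{\theta,x}(\mathrm{Bad})=0$ for $\mu_\theta$-almost every $x$, and on the slice through such an $x$ the condition defining $\mathrm{Bad}$ becomes precisely ``$x\in B_{b_1\cdots b_n}$ for some $n$''. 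For $\mu$-almost every $\underline a$ the point $\pi_\theta(\underline a)$ is such a $\mu_\theta$-typical $x$, and since $\mu_{\underline a,\theta,r_1}$ is $\mu_{\theta,\pi_\theta(\underline a)}$ restricted to the relevant segment and renormalised (an operation preserving null sets, uniformly in $r_1$), the hypothesis of Lemma \ref{DilatingLemma} holds for $\mu_{\underline a,\theta,r_1}$-almost every $\underline b$ and all $r_1$. Applying the lemma then yields the asserted limit, and running $\theta$ over $G$ and $\underline a$ over the corresponding $\mu$-full set gives the conclusion for $\mu_F$-almost every $\theta$ and $\mu$-almost every $\underline a$.
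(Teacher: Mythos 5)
Your proof is correct, and it reaches the hypotheses of Lemma \ref{DilatingLemma} by a genuinely different route from the paper. You verify absolute continuity of the cylinder projections by domination --- $\mu|_{X_{b_1\cdots b_n}}\le\mu$, hence $\pi_\theta(\mu|_{X_{b_1\cdots b_n}})\le\mu_\theta\ll\mathcal L$ --- and then apply Lemma \ref{ACFlow} separately to each of the countably many cylinder projections, discarding a countable union of null sets; the paper instead invokes the affinity relation of \cite{FalKemp}, under which $\pi_\theta(\mu|_{X_{b_1\cdots b_n}})$ centred at $\pi_\theta(\underline b)$ is a rescaled copy of the \emph{full} projection $\mu_{\phi_{b_n}\circ\cdots\circ\phi_{b_1}(\theta)}$ centred at $\pi_{\phi_{b_n}\circ\cdots\circ\phi_{b_1}(\theta)}(\sigma^n(\underline b))$, and then uses the $f$-invariance of $\mu\times\mu_F$ to guarantee that these rotated directions and shifted centres remain typical, so that the projection condition and Lemma \ref{ACFlow} apply to them. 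Both arguments finish with the same transfer to slice measures: your explicit disintegration step (a $\mu$-null set is $\mu_{\theta,x}$-null for $\mu_\theta$-a.e.\ $x$, and restriction/renormalisation preserves null sets) is precisely the content of the paper's one-line transfer claim, and your flagged ``delicate point'' --- that $\pi_\theta(\underline b)\equiv x$ is constant along a slice, so $x$ itself must be a good point for every cylinder met by the slice --- is handled correctly, indeed more carefully than in the paper. As for what each approach buys: yours is more elementary and self-contained (no appeal to \cite{FalKemp}, no stationarity of $\mu_F$), and it in fact applies to any individual $\theta$ for which $\mu_\theta$ is absolutely continuous, the $\mu_F$-typicality of $\theta$ being needed only inside Lemma \ref{DilatingLemma} itself for the strip characterisation (equation \ref{SlicingStrips}) of slice measures; the paper's route, on the other hand, exposes the structural fact that cylinder projections in a fixed direction are affine copies of full projections in $\mu_F$-rotated directions, which is the mechanism that motivates phrasing the projection condition over $\mu_F$-typical directions in the first place and which recurs in the rest of the paper.
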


\begin{proof}
First we note, using our affinity relation, that the measure obtained by projecting $\mu|_{X_{b_1\cdots b_n}}$ in direction $\theta\in supp(\mu_F)$ centred at $\pi_{\theta}(\underline b)$ is a scaled down copy of the measure obtained by projecting $\mu$ in direction $\phi_{b_n}\circ\cdots \circ\phi_{b_1}(\theta)$ centred at $\pi_{\phi_{b_n}\circ\cdots \circ\phi_{b_1}(\theta)}(\sigma^n(\underline b))$, see \cite{FalKemp} for a careful proof. 

Since the directions $\theta$ are distibuted according to $\mu_F$, for $\mu$-almost every $\underline b$ it follows that the projected measure $\mu_{\phi_{b_n}\circ\cdots \circ\phi_{b_1}(\theta)}$ is absolutely continuous for all $n$ and that the scenery flow centred at $\pi_{\phi_{b_n}\circ\cdots \circ\phi_{b_1}(\theta)}(\sigma^n(\underline b))$ converges to Lebesgue measure. 

If a condition holds for $\mu\times \mu_F$-almost every $(\underline b,\theta)$ then it follows that for all $r_1>0$, for $\mu\times\mu_F$ almost every $(\underline a,\theta)$ the condition holds for $\mu_{\underline a,\theta,r_1}$ almost every $\underline b$. Then we see that the hypotheses of Lemma \ref{DilatingLemma} hold, and so the conclusions hold also, as required.


\end{proof}

\subsection{An Ergodic Flow}
Given a pair $(\underline a, t)$ we let 
\[
n=n(\underline a,t)=\max\{n\in\mathbb N: B(\underline a,e^{-t})\subset X_{a_1\cdots a_n}\}.
\]
Then we associate to small ball $B(\underline a,e^{-t})$, coupled with angle $\theta$, the ellipse 
\[Y_{\sigma^n(\underline a), \phi_{a_n}\circ\cdots\circ \phi_{a_1}(\theta), t+\log(\alpha_2(a_1\cdots a_n)),t+\log(\alpha_1(a_1\cdots a_n))}.\]

Note that the first two parameters here are equal to $f^n(\underline a,\theta)$. Since $\alpha_2(a_1\cdots a_n)\to 0$ as $t,n\to\infty$, $\log(\alpha_2(a_1\cdots a_n))$ is negative. In fact, the quantity $t+\log(\alpha_2(a_1\cdots a_n))$ remains bounded as $t\to\infty$.

Let 
\[
\nu(\underline a,\theta,t):= D_{f^n(\underline a,\theta),t+\log(\alpha_1(a_1\cdots a_n)),t+\log(\alpha_2(a_1\cdots a_n))}(\mu|_{Y_{f^n(\underline a,\theta). t+\log(\alpha_1(a_1\cdots a_n)),t+\log(\alpha_2(a_1\cdots a_n))}}),
\]
The measures $\nu(\underline a,\theta,t)$ are elements of $\mathcal M$.

We define a map $F_2$ which takes probability measures on $[-1,1]$ to probability measures on $X$ by

\[
F_2(m):= \frac{(\mathcal L\times m)|_{X}}{(\mathcal L\times m)(X)}.
\]

Let the distribution $ P_2$ on the space of Borel probability measures on $X$  be the image of $ P$ under $F_2$, where $ P$ was defined in Section \ref{SlicingSection}. Then the two dimensional scenery flow on $(\mathcal M, P_2)$ is a factor of the one dimensional scenery flow on $(\mathcal M_1, P)$ under the factor map $F_2$, and so it follows immediately that $ P_2$ is an ergodic fractal distribution, since ergodicity passes to factors of ergodic systems. One can readily verify that the distribution $ P_2$ is quasi-Palm.

\begin{theorem}
For $\mu\times \mu_F$ almost every pair $(\underline a,\theta)$ 
\[
\lim_{T\to\infty}\frac{1}{T}\int_0^T \nu(\underline a,\theta,t) dt = P_2.
\]
\end{theorem}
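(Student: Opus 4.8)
The plan is to identify, up to an error that vanishes as $t\to\infty$, the measure $\nu(\underline a,\theta,t)$ with the image under $F_2$ of the sliced scenery-flow measure of Section \ref{SlicingSection}, and then to deduce the statement from Theorem \ref{SliceScaling} and the continuity of $F_2$. As with Theorem \ref{SliceScaling}, the displayed limit should be read as convergence of the scenery distributions $\frac1T\int_0^T\delta_{\nu(\underline a,\theta,t)}\,dt$ to $P_2$ in the weak$^*$ topology. Throughout we invoke the projection condition, which enters only through Corollary \ref{CorCondition}.

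First I would analyse the ellipse $Y$ in the definition of $\nu(\underline a,\theta,t)$. For $n=n(\underline a,t)$ the ball $B(\underline a,e^{-t})$ lies in $X_{a_1\cdots a_n}$, and $T_{a_1\cdots a_n}^{-1}$ carries it, by self-affinity, to the ellipse centred at $\pi(\sigma^n(\underline a))$ whose major axis is aligned asymptotically with $\rho:=\phi_{a_n}\circ\cdots\circ\phi_{a_1}(\theta)$. The defining choice of $n$ keeps the major-axis parameter $\tilde s(t):=t+\log\alpha_2(a_1\cdots a_n)$ bounded, while the minor-axis parameter $t+\log\alpha_1(a_1\cdots a_n)$ tends to infinity; that is, the eccentricity of $Y$ blows up as $t\to\infty$. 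This is exactly the regime of Lemma \ref{DilatingLemma}, so Corollary \ref{CorCondition} applies and gives, for $\mu\times\mu_F$-almost every $(\underline a,\theta)$,
\[
\lim_{t\to\infty} d\Big(\nu(\underline a,\theta,t),\ F_2\big(F(f^n(\underline a,\theta),\tilde s(t))\big)\Big)=0,
\]
where $d$ is the Prokhorov metric and $F(f^n(\underline a,\theta),\tilde s(t))$ is the sliced measure of Section \ref{SlicingSection} based at $f^n(\underline a,\theta)$ with time-coordinate $\tilde s(t)$.

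Second I would observe that the pair $\big(f^n(\underline a,\theta),\tilde s(t)\big)$ is precisely the point $\psi_t(\underline a,\theta,0)$ of the suspension flow on $Z_r$: within a level the coordinate $\tilde s(t)$ increases at unit rate in $t$, and each transition $n\mapsto n+1$ is the roof identification $((\underline a,\theta),r(\underline a,\theta))\sim(f(\underline a,\theta),0)$, so that scenery time and suspension time coincide. Checking that the geometric roof $r_2-r_1$ of Section \ref{SlicingSection} matches the increments of $\tilde s$ is a routine but necessary bookkeeping, carried out with the quantities $r_1,r_2$ of that section. Writing $G:=F_2\circ F$, the previous display reads $\nu(\underline a,\theta,t)\approx G(\psi_t(\underline a,\theta,0))$. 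Now by Theorem \ref{SliceScaling} the scenery distributions $\frac1T\int_0^T\delta_{F(\psi_t(\underline a,\theta,0))}\,dt$ converge to $P$; pushing forward by the continuous map $F_2$, and using that pushforward commutes with the time average, gives
\[
\lim_{T\to\infty}\frac1T\int_0^T \delta_{G(\psi_t(\underline a,\theta,0))}\,dt = P\circ F_2^{-1}=P_2.
\]

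Finally I would transfer this back to $\nu(\underline a,\theta,t)$. Since $m\mapsto\delta_m$ is continuous, the Prokhorov distance between the distributions $\frac1T\int_0^T\delta_{\nu(\underline a,\theta,t)}\,dt$ and $\frac1T\int_0^T\delta_{G(\psi_t(\underline a,\theta,0))}\,dt$ is bounded by the time average $\frac1T\int_0^T d\big(\nu(\underline a,\theta,t),G(\psi_t(\underline a,\theta,0))\big)\,dt$. The main obstacle is that the convergence in Corollary \ref{CorCondition} is only pointwise in the base point, whereas this time average samples the whole orbit, including the early levels where the eccentricity of $Y$ is not yet large. I would resolve this exactly as in Observation 2 of Section \ref{PrzSection}: Egorov's theorem upgrades the almost-everywhere convergence in Corollary \ref{CorCondition} to uniform convergence on a set $K$ with $(\mu\times\mu_F)(K)>1-\epsilon$, and the ergodicity of $f$ (equivalently of $\psi$) guarantees that for almost every $(\underline a,\theta)$ the orbit spends a proportion at least $1-\epsilon$ of $[0,T]$ with base point in $K$ as $T\to\infty$. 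On that proportion the integrand is uniformly small; the complementary proportion, together with the finitely many initial levels, contributes $O(\epsilon)$ to the Cesàro average. Letting $\epsilon\to0$ shows the two distributions share the limit $P_2$, which completes the proof.
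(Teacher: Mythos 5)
Your proposal is correct and follows essentially the same route as the paper's proof: approximate $\nu(\underline a,\theta,t)$ by $F_2$ of the sliced scenery measures via Lemma \ref{DilatingLemma} and Corollary \ref{CorCondition}, upgrade the pointwise convergence to uniform convergence off a small set by Egorov's theorem, and then invoke Theorem \ref{SliceScaling} together with ergodicity to transfer the limit $P$ through $F_2$ to $P_2$. In fact your write-up makes explicit several steps the paper leaves implicit (the identification of scenery time with suspension time, and the Birkhoff argument controlling the proportion of time the orbit spends in the Egorov set).
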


\begin{proof}
Let $(\underline a,\theta)$ be such that the sliced scenery flow on the measure $\mu_{\underline a,\theta,1}$ generates $ P$, and such that $f^n(\underline a,\theta)$ satisfies the conditions of Corollary \ref{CorCondition} for each $n\in\mathbb N$. The set of $(\underline a,\theta)$ for which this holds has $\mu\times\mu_F$ measure one, since it is a countable intersection of sets of measure $1$.

For $\epsilon,N>0$, let the bad set $B(\epsilon,N)$ be given by
\[
B(\epsilon,N):=\left\{(\underline a,\theta):\left|D_{\underline a, \theta, r_1,r_2}(\mu|_{Y(\underline a, \theta, 0,r_2)})-\frac{(\mathcal L\times \mu_{\underline a,\theta,0})|_{X}}{(\mathcal L\times \mu_{\underline a,\theta,0})(X)}\right|>\epsilon \text{ for some } r_2>N\right\}.
\]
Then for all $\epsilon,\epsilon_2>0$, using Egorov's theorem and Lemma \ref{DilatingLemma}, there exist $N>0$ such that
\[
(\mu\times\mu_F)(B(\epsilon,\delta))<\epsilon_2.
\]
Now note that for any $\underline a$ there exists a $T$ such that $T_{a_1\cdots a_n}^{-1}(B(\pi(\underline a),e^{-t}))$ is an ellipse with minor axis of length less than $e^{-N}$ for all $t>T$. Then since the scenery flow on $(\underline a,\theta,1)$ generates $ P$, and since $\epsilon_1, N$ were arbitrary, we see that
\[
\lim_{T\to\infty}\frac{1}{T}\int_0^T \nu(\underline a,\theta,t) dt = P_2.
\]
as required.
\end{proof}

Finally we state a continuity result. The proof of this result requires a little geometry, and is most likely of limited interest, and so can be found in the appendix.

\begin{prop}\label{ContinuityProp}
For each $\underline a\in\Sigma, t\in\mathbb R$ the map \[\theta\to \nu_{\underline a,\theta,t}\] is continuous in $\theta$ and this continuity is uniform over $t$. in particular, for all $\theta\in\mathbb P\mathbb R^1$ and for all $\epsilon>0$ there exists $\delta>0$ such that if $|\theta-\theta'|<\delta$ then for all subsets $A\subset\mathcal M$ we have
\[
\left|\lim_{T\to\infty}\frac{1}{T}\mathcal L\{t\in[0,T]:\nu(a,\theta,t)\in A\}-\lim_{T\to\infty}\frac{1}{T}\mathcal L \{t\in[0,T]:\nu(\underline a,\theta',t)\in A\}\right|<\epsilon,
\]
and so the sceneries generated by $\nu(\underline a,\theta,t)$ and $\nu(\underline a,\theta',t)$ are close.
\end{prop}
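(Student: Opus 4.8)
The plan is to exploit the fact that, for fixed $\underline a$ and $t$, almost all of the data defining $\nu(\underline a,\theta,t)$ is independent of $\theta$. Writing $n=n(\underline a,t)$, the index $n$, the centre $\pi(\sigma^n(\underline a))$ of the ellipse, and the two axis-scalings $\alpha_1(a_1\cdots a_n)$ and $\alpha_2(a_1\cdots a_n)$ all depend only on $\underline a$ and $t$. The angle $\theta$ enters only through the transformed direction $\theta_n:=\phi_{a_n}\circ\cdots\circ\phi_{a_1}(\theta)$, which simultaneously fixes the orientation of the ellipse $Y$ and the rotational part of the dilation $D$. Thus, for $\theta,\theta'\in\mathcal Q_2$, the comparison of $\nu(\underline a,\theta,t)$ with $\nu(\underline a,\theta',t)$ reduces to a comparison of two dilations that agree in every respect except that one aligns $\theta_n$ to the vertical and the other aligns $\theta'_n$.

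First I would quantify this discrepancy. Working in coordinates rotated so that $\theta_n$ is vertical, the dilation acts as a diagonal stretch $\mathrm{diag}(e^{r_2},e^{r_1})$ with $r_1=t+\log\alpha_1$ and $r_2=t+\log\alpha_2$, while the competing dilation differs by a rotation $R_{\delta_n}$ through $\delta_n:=\theta'_n-\theta_n$ inserted before the stretch. A direct computation shows that the composite distortion $D_{\underline a,\theta,\cdot}\circ D_{\underline a,\theta',\cdot}^{-1}$ is, in these coordinates, the conjugate
\[
\mathrm{diag}(e^{r_2},e^{r_1})\,R_{\delta_n}\,\mathrm{diag}(e^{-r_2},e^{-r_1})=\begin{pmatrix}\cos\delta_n & -e^{r_2-r_1}\sin\delta_n\\[2pt] e^{r_1-r_2}\sin\delta_n & \cos\delta_n\end{pmatrix},
\]
whose deviation from the identity is governed by the single large entry $e^{r_1-r_2}\sin\delta_n=(\alpha_1/\alpha_2)\sin\delta_n$; recall that the eccentricity $\alpha_1/\alpha_2$ of $Y$ is enormous and tends to infinity with $t$.

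The crux of the argument is to show that this potentially huge amplification is cancelled by a matching contraction of the angle. Since $\phi_{a_n}\circ\cdots\circ\phi_{a_1}$ is the projective action of $A_{a_1\cdots a_n}^{-1}$, whose singular values are $\alpha_2^{-1}\ge\alpha_1^{-1}$, its derivative on the relevant part of $\mathbb P\mathbb R^1$ is comparable to the singular-value ratio $\alpha_2/\alpha_1$. The strict positivity (cone) condition guarantees that the forward images of $\mathcal Q_2$ stay in a fixed subcone bounded away from the repelling direction, so Birkhoff's contraction estimate gives $|\delta_n|\le C\,(\alpha_2/\alpha_1)\,|\theta-\theta'|$ with $C$ uniform over all words $a_1\cdots a_n$. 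Substituting this bound, the dangerous entry satisfies $(\alpha_1/\alpha_2)|\sin\delta_n|\le C|\theta-\theta'|$, the factors $\alpha_1/\alpha_2$ and $\alpha_2/\alpha_1$ cancelling exactly; hence the distortion lies within $C|\theta-\theta'|$ of the identity, uniformly in $t$ and $n$. The same cancellation controls the relative $\mu$-mass of the symmetric difference of the two rotated ellipses: rotating by $\delta_n$ displaces the boundary of $Y$ by at most $e^{-r_1}\delta_n$ in the minor direction, a fraction $(\alpha_1/\alpha_2)\delta_n=O(|\theta-\theta'|)$ of the minor axis, so after dilation the displaced mass is also $O(|\theta-\theta'|)$.

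Combining these two estimates, a linear map within $C|\theta-\theta'|$ of the identity moves any probability measure on $X$ by $O(|\theta-\theta'|)$ in the Prokhorov metric, while the symmetric-difference bound controls the change of support and of the normalising mass; hence $d(\nu(\underline a,\theta,t),\nu(\underline a,\theta',t))\le C'|\theta-\theta'|$ for every $t$. This uniform-in-$t$ closeness of the two trajectories transfers to their time-averages: for a weak$^*$ convergence-determining set $A$, the occupation times $\frac1T\mathcal L\{t\in[0,T]:\nu(\underline a,\theta,t)\in A\}$ and $\frac1T\mathcal L\{t\in[0,T]:\nu(\underline a,\theta',t)\in A\}$ can differ only on those $t$ for which $\nu(\underline a,\theta,t)$ lies within $C'|\theta-\theta'|$ of $\partial A$, a negligible set for $P_2$-continuity sets, which yields the stated inequality. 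The main obstacle is precisely the key cancellation of paragraph three: verifying that the projective contraction of $A_{a_1\cdots a_n}^{-1}$ matches the eccentricity $\alpha_1/\alpha_2$ with a constant independent of the word requires the uniform Birkhoff contraction furnished by the cone condition, together with care that the orbit of $\mathcal Q_2$ stays away from the repelling direction. This is the geometric input that justifies deferring the detailed estimates to the appendix.
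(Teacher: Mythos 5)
Your argument rests on exactly the cancellation that drives the paper's appendix, but assembles it by a different mechanism. Your key bound $|\delta_n|\le C(\alpha_2/\alpha_1)|\theta-\theta'|$ is the inequality form of the paper's Lemma \ref{AngEll1}, which is an exact identity, $\frac{\alpha_1}{\alpha_2}\tan(\phi_{a_1\cdots a_n}(\theta)-\phi_{a_1\cdots a_n}(\theta(a_1\cdots a_n)))=\tan(\theta-\theta(a_1\cdots a_n))$; and your appeal to the cone condition to make $C$ uniform over words is correct and is only implicit in the paper (the repelling direction is the major-axis direction of $X_{a_1\cdots a_n}$, which by Perron--Frobenius applied to $A_{a_1\cdots a_n}A_{a_1\cdots a_n}^T$ stays in a fixed compact subcone of $\mathcal Q_1$, hence at angle bounded away from $\mathcal Q_2$). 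Where you genuinely diverge is the assembly: the paper never computes a distortion matrix; it converts the angle estimate into a containment of ellipses (Lemma \ref{AngEll2}: the $(1-\epsilon)$-shrunk rotated copy sits inside the original), factors the dilation of the inner ellipse through the dilation of the outer one, and checks that the residual map $D_W$ is near the identity. You instead conjugate $R_{\delta_n}$ by the diagonal stretch, read off that the only dangerous entry is $(\alpha_1/\alpha_2)\sin\delta_n=O(|\theta-\theta'|)$, and compare pushforwards of one restricted measure under two uniformly close linear maps. Your version buys an explicit Lipschitz-type modulus of continuity; the paper's version buys that one never has to compare measures restricted to two different ellipses head on, which is precisely where your argument is thinnest. (A bookkeeping slip: the major axis, of length $2e^{-t}/\alpha_2$, is stretched by $e^t\alpha_2$ and the minor axis by $e^t\alpha_1$, so your identification of $r_1,r_2$ is swapped; the dangerous entry is still $(\alpha_1/\alpha_2)\sin\delta_n$, so nothing is lost.)

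The step you should not wave through is ``the displaced mass is also $O(|\theta-\theta'|)$''. Your geometry shows that the symmetric difference $Y_{\theta_n}\triangle Y_{\theta'_n}$ is a sliver whose dilated image has \emph{width} $O(|\theta-\theta'|)$; it does not follow that its relative $\mu$-mass $\mu(Y_{\theta_n}\triangle Y_{\theta'_n})/\mu(Y_{\theta_n})$ is small, and at particular times $t$ it need not be, since $\mu$ can concentrate near the boundary of the ellipse at particular scales. The same problem affects the ratio of the two normalising constants. This is the one place where a pointwise-in-$t$, uniform-over-$t$ Prokhorov bound can actually fail; to be fair, the paper's own appendix has a matching lacuna (its final two lemmas control areas and the map $D_W$, never the $\mu$-mass of $X\setminus W$), so you are not below the paper's standard of rigour here. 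Since the proposition is stated, and used, only through occupation measures, the defect is repairable: a telescoping estimate on $t\mapsto\log\mu(Y_{\underline a,\theta,\cdot})$ shows that the set of times at which a definite fraction of the restricted mass lies in the boundary sliver has small upper density, and outside those times your comparison, and then your final continuity-set argument for the time averages, runs as written.
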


\section{The Full Scenery Flow}
We now relate the scenery flow on $\mu$ to the measures $\nu$ of the previous section.

We let $S_{t,\underline a}$ denote the bijective linear map from $B(\pi(\underline a),e^{-t})$ to $X$ given by expanding all vectors by $e^t$ and translating the resulting ball to the origin. We also let $S_{t,\underline a}$ be the scenery flow map from finite measures on $B(\pi(\underline a),e^{-t})$ to probability measures on $X$. In this section we first describe the preimages of small balls under maps $T_{a_1\cdots a_n}^{-1}$, and then decompose the scenery flow for $\mu$ using the maps $D$ of the previous section.

The fact that we are considering only strictly positive matrices leads to some simple observations about the intersection of $B(x,r)$ with the self-affine set $E$. Let $\alpha_1(a_1\cdots a_n)$, $\alpha_2(a_1\cdots a_n)$ denote the lengths of the major and minor axes of the ellipse $X_{a_1\cdots a_n}$. Then the ratio $\frac{\alpha_2(a_1\cdots a_n)}{\alpha_1(a_1\cdots a_n)}$ tends to $0$ as $n\to\infty$ at some uniform rate independent of $\underline a$ (see \cite{FalKemp}). 

There exists a H\"older continuous function $F:\Sigma\to\mathbb P\mathbb R^1$ such that, for each $\underline a\in\Sigma$, the ellipses $X_{a_1\cdots a_n}$ are aligned so that the angle that their long axis makes with the $x$-axis tends to $ F(\underline a)$ as $n\to\infty$. This convergence is uniform over $\underline a\in\Sigma$. In fact, $ F(\underline a)$ is given by
\[
F(\underline a):=\lim_{n\to\infty} \phi_{a_1}^{-1}\circ\cdots\circ\phi_{a_n}^{-1}(0)\in\mathcal Q_1.
\]
The strong stable foliation, which gives the limiting direction of the minor axis of ellipses $X_{a_1\cdots a_n}$, is given by
\[
F_{ss}(\underline a):=\lim_{n\to\infty} \phi_{a_1}\circ\cdots\circ \phi_{a_n}(0)\in\mathcal Q_2.
\]
Note that $ F(\underline a)$ and $ F_{ss}(\underline a)$ are perpendicular.

Let $\theta(a_1\cdots a_n)\in\mathcal Q_2$  be the direction of the minor axis of the ellipse $X_{a_1\cdots a_n}$. 

\begin{prop}\label{PullbackProp}
Let $e^{-t}<\alpha_2(a_1\cdots a_n)$. Then
\[
T_{a_1\cdots a_n}^{-1}(B(\pi(\underline a),e^{-t}))
\]
is an ellipse centred at $\pi(\sigma^n(\underline a))$ with major axis of length $e^{-t}.(\alpha_2(a_1\cdots a_n))^{-1}$ aligned in direction
\[
\phi_{a_n}\circ\cdots \phi_{a_1}(\theta(a_1\cdots a_n))
\]
and minor axis of length equal to $e^{-t}.(\alpha_1(a_1\cdots a_n))^{-1}$. 
\end{prop}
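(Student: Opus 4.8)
The plan is to reduce the statement to a singular value decomposition of the linear part of $T_{a_1\cdots a_n}$ and to track how the disk $B(\pi(\underline a),e^{-t})$ transforms under the inverse. Write $A:=A_{a_1}A_{a_2}\cdots A_{a_n}$ for the linear part of $T_{a_1\cdots a_n}$, so that $T_{a_1\cdots a_n}(y)=Ay+b$ for a suitable translation $b$ and $T_{a_1\cdots a_n}^{-1}$ is the affine map with linear part $A^{-1}=A_{a_n}^{-1}\cdots A_{a_1}^{-1}$. First I would pin down the centre: the coding relation $\pi(\underline a)=T_{a_1\cdots a_n}(\pi(\sigma^n(\underline a)))$ gives immediately $T_{a_1\cdots a_n}^{-1}(\pi(\underline a))=\pi(\sigma^n(\underline a))$. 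Since an affine map sends the disk $B(\pi(\underline a),e^{-t})=\pi(\underline a)+e^{-t}X$ to
\[
T_{a_1\cdots a_n}^{-1}(B(\pi(\underline a),e^{-t}))=\pi(\sigma^n(\underline a))+e^{-t}A^{-1}(X),
\]
the whole statement becomes a question about the ellipse $e^{-t}A^{-1}(X)$, and in particular about the singular values and singular vectors of $A^{-1}$.

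Next I would write the singular value decomposition $A=\sigma_1\,u_1v_1^{\mathsf T}+\sigma_2\,u_2v_2^{\mathsf T}$ with $\sigma_1\ge\sigma_2>0$ and orthonormal pairs $u_1,u_2$ and $v_1,v_2$. By definition $X_{a_1\cdots a_n}=A(X)+b$ is the ellipse with principal directions $u_1$ (major) and $u_2$ (minor) and axis lengths proportional to $\sigma_1$ and $\sigma_2$; up to the factor-of-two convention in the definition of axis length this identifies $\alpha_1=\sigma_1$, $\alpha_2=\sigma_2$, and identifies $\theta(a_1\cdots a_n)$ as the direction of $u_2$. Inverting, $A^{-1}=\sigma_2^{-1}\,v_2u_2^{\mathsf T}+\sigma_1^{-1}\,v_1u_1^{\mathsf T}$ has singular values $\sigma_2^{-1}\ge\sigma_1^{-1}$ with left singular vectors $v_2,v_1$. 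Hence $e^{-t}A^{-1}(X)$ is the ellipse with major axis of length $e^{-t}\sigma_2^{-1}=e^{-t}\alpha_2^{-1}$ in direction $v_2$ and minor axis of length $e^{-t}\sigma_1^{-1}=e^{-t}\alpha_1^{-1}$, which is exactly the claimed pair of lengths.

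It then remains to identify the major-axis direction $v_2$ with $\phi_{a_n}\circ\cdots\circ\phi_{a_1}(\theta(a_1\cdots a_n))$. Here I would use that each $\phi_i$ is, by definition, the action of $A_i^{-1}$ on $\mathbb P\mathbb R^1$, so the composition $\phi_{a_n}\circ\cdots\circ\phi_{a_1}$ is the projective action of $A_{a_n}^{-1}\cdots A_{a_1}^{-1}=A^{-1}$, the order reversal being forced by inverting a product. Applying this to $\theta(a_1\cdots a_n)=[u_2]$ gives $[A^{-1}u_2]=[\sigma_2^{-1}v_2]=[v_2]$, matching the direction computed above. The smallness hypothesis $e^{-t}<\alpha_2(a_1\cdots a_n)$ enters only to guarantee that the image ellipse has major axis of length less than one and so sits inside $X$ (equivalently that $B(\pi(\underline a),e^{-t})$ is small enough to lie in $X_{a_1\cdots a_n}$), so that the self-affine relation applies as stated.

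I do not expect a genuine obstacle, as the core is elementary linear algebra; the only points requiring care are bookkeeping ones. Namely, one must match the geometric data $\alpha_1,\alpha_2,\theta$ of $X_{a_1\cdots a_n}$ to the correct singular values and vectors of $A$ (the major axis of the preimage corresponds to the \emph{smaller} singular value of $A$), keep track of the order reversal in $A^{-1}=A_{a_n}^{-1}\cdots A_{a_1}^{-1}$ so that the composition of the $\phi_{a_i}$ comes out in the stated order, and fix a consistent factor-of-two normalisation of ``axis length'' throughout.
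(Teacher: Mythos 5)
Your proposal is correct and takes essentially the same approach as the paper: both arguments identify the extremal expansion rates of $T_{a_1\cdots a_n}^{-1}$ as $\alpha_2(a_1\cdots a_n)^{-1}$ and $\alpha_1(a_1\cdots a_n)^{-1}$, attained along the minor and major axes of $X_{a_1\cdots a_n}$ respectively, and both obtain the major-axis direction of the image by applying the projective action of $A_{a_n}^{-1}\cdots A_{a_1}^{-1}$, i.e.\ $\phi_{a_n}\circ\cdots\circ\phi_{a_1}$, to $\theta(a_1\cdots a_n)$. The paper expresses this through the geometric observation that $T_{a_1\cdots a_n}^{-1}$ maps the ellipse $X_{a_1\cdots a_n}$ onto the unit disk, while your singular value decomposition is the same observation written out in explicit linear algebra.
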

Stated using our notation for ellipses, this says
\[
T_{a_1\cdots a_n}^{-1}(B(\pi(\underline a),e^{-t}))=Y_{\sigma^n(\underline a),\phi_{a_n}\circ\cdots \phi_{a_1}(\theta(a_1\cdots a_n)), t+\log(\alpha_2(a_1\cdots a_n)), t+\log(\alpha_1(a_1\cdots a_n))}.
\]
\begin{proof}
Lines which bisect the ellipse $X_{a_1\cdots a_n}$ just touching the edges and passing through the centre are mapped by $T_{a_1\cdots a_n}^{-1}$ to lines passing through the origin which just touch the boundary of the unit disk. This fact allows us to see how much the linear map $T_{a_1\cdots a_n}^{-1}$ expands different lines.

In particular, the maximal expansion rate is on lines in direction $\theta(a_1\cdots a_n)$, parallel to the minor axis of $X_{a_1\cdots a_n}$. These are expanded linearly by a factor $\frac{1}{\alpha_2(a_1\cdots a_n)}$,  and by the definition of $\phi_i$ we see they are mapped to direction $\phi_{a_n}\circ\cdots \phi_{a_1}(\theta(a_1\cdots a_n))$, note the reversed order of the word $a_1\cdots a_n$ here.

The major axis of $X_{a_1\cdots a_n}$ gives rise to the smallest expansion rate of the map $T_{a_1\cdots a_n}^{-1}$, which is $\frac{1}{\alpha_1(a_1\cdots a_n)}$, thus the minor axis of $T_{a_1\cdots a_n}^{-1}B(\pi(\underline a),e^{-t}))$ has length $e^{-t}.(\alpha_1(a_1\cdots a_n))^{-1}$.
\end{proof}

We now discuss functions which map our ellipses $T_{a_1\cdots a_n}^{-1}(B(\pi(\underline a),r))$ to the unit disk. Note that any bijective linear map from $B(\pi(\underline a),r)$ to $X$ which maps $\pi(\underline a)$ to the origin and which preserves the directions $ F(\underline a)$ and $ F_{ss}(\underline a)$ must be the same as our dilation map $S_{-\log r,\underline a}$. This is because a linear map in $\mathbb R^2$ is uniquely determined by its action on any two vectors which span $\mathbb R^2$.


\begin{prop}\label{DecompositionProp}
We have
\[
S_{t,\underline a}= R_{\theta^{\perp}(a_1\cdots a_n)}\circ D_{\sigma^n(\underline a),\phi_{a_n}\circ\cdots \phi_{a_1}(\theta(a_1\cdots a_n)), t+\log(\alpha_2(a_1\cdots a_n)), t+\log(\alpha_1(a_1\cdots a_n)} \circ T_{a_1\cdots a_n}^{-1}
\]

where $n=n(\underline a,t)$ is such that $n$ is the largest natural number for which $B(\pi(\underline a),e^{-t})\subset X_{a_1\cdots a_n}$.
\end{prop}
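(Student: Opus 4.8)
The plan is to exploit the uniqueness principle noted just before Proposition \ref{PullbackProp}: a bijective affine map carrying a round disc to a round disc and centre to centre is forced to be a similarity, so that $S_{t,\underline a}$ and the composition $D_{\sigma^n(\underline a),\ldots}\circ T_{a_1\cdots a_n}^{-1}$ can differ only by an orthogonal map fixing the origin. The task then reduces to identifying that orthogonal map as the rotation $R_{\theta^{\perp}(a_1\cdots a_n)}$.

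First I would assemble the two ingredients already in hand. By Proposition \ref{PullbackProp}, the hypothesis $n=n(\underline a,t)$ (equivalently $e^{-t}<\alpha_2(a_1\cdots a_n)$) guarantees that $T_{a_1\cdots a_n}^{-1}$ carries the ball $B(\pi(\underline a),e^{-t})$ bijectively onto the ellipse
\[
Y:=Y_{\sigma^n(\underline a),\,\phi_{a_n}\circ\cdots\circ\phi_{a_1}(\theta(a_1\cdots a_n)),\,t+\log\alpha_2(a_1\cdots a_n),\,t+\log\alpha_1(a_1\cdots a_n)},
\]
sending the centre $\pi(\underline a)$ to the centre $\pi(\sigma^n(\underline a))$ and, since $Y\not\subset X_{a_1}$, lying in the domain of $D$. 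By the definition of the dilation map, $D_{\sigma^n(\underline a),\ldots}$ sends $Y$ bijectively onto $X$, taking its centre to $0$, its major axis (direction $\phi_{a_n}\circ\cdots\circ\phi_{a_1}(\theta(a_1\cdots a_n))$) onto $\{0\}\times[-1,1]$, and its minor axis onto $[-1,1]\times\{0\}$. Hence $D\circ T_{a_1\cdots a_n}^{-1}$ is a bijective affine map $B(\pi(\underline a),e^{-t})\to X$ fixing centres. As it carries a round disc to a round disc it is a similarity, and the ratio of radii forces its linear part to be $e^{t}$ times an orthogonal map $O$. Since $S_{t,\underline a}$ is exactly the dilation $x\mapsto e^{t}(x-\pi(\underline a))$, I obtain $D\circ T_{a_1\cdots a_n}^{-1}=O\circ S_{t,\underline a}$, so $S_{t,\underline a}=O^{-1}\circ D\circ T_{a_1\cdots a_n}^{-1}$, and only the identification of $O^{-1}$ remains.

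To pin down $O$ I track distinguished directions. Proposition \ref{PullbackProp} shows that the maximal-expansion direction $\theta(a_1\cdots a_n)$ (the minor axis of $X_{a_1\cdots a_n}$) is sent by $T_{a_1\cdots a_n}^{-1}$ to the major axis of $Y$, which $D$ then sends to the vertical $\{0\}\times[-1,1]$; symmetrically the major-axis direction $\theta^{\perp}(a_1\cdots a_n)$ is sent to the horizontal $[-1,1]\times\{0\}$. The pure dilation $S_{t,\underline a}$ fixes every direction, so $O$ carries $\theta(a_1\cdots a_n)$ to the vertical and $\theta^{\perp}(a_1\cdots a_n)$ to the horizontal; thus $O^{-1}$ rotates the vertical back onto $\theta(a_1\cdots a_n)$, which is precisely $R_{\theta^{\perp}(a_1\cdots a_n)}$. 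Equivalently, $R_{\theta^{\perp}(a_1\cdots a_n)}\circ D\circ T_{a_1\cdots a_n}^{-1}$ is a bijective linear map onto $X$ fixing centres and preserving the two perpendicular oriented directions $\theta(a_1\cdots a_n)$ and $\theta^{\perp}(a_1\cdots a_n)$, so by the uniqueness reasoning it must equal $S_{t,\underline a}$.

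The main obstacle is confirming that $O^{-1}$ is a genuine rotation rather than a reflection: specifying the image of a single direction determines an element of the orthogonal group only up to a reflection across that direction, so the two direction computations above are consistent with one and the same rotation $R_{\theta^{\perp}(a_1\cdots a_n)}$ only if $D\circ T_{a_1\cdots a_n}^{-1}$ preserves orientation. I would settle this by checking that the ordered frame (major axis, minor axis) of $X_{a_1\cdots a_n}$ has the same orientation as (horizontal, vertical), which amounts to a sign computation on $\det(D)\cdot\det(T_{a_1\cdots a_n}^{-1})$. The strict positivity of the matrices $A_i$ keeps $\theta(a_1\cdots a_n)\in\mathcal Q_2$ and $\theta^{\perp}(a_1\cdots a_n)\in\mathcal Q_1$ for every $n$, so this orientation is constant and the required sign is fixed; this is the one step where the cone condition is genuinely used, and it is where I would concentrate the care in the full write-up.
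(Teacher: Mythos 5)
Your core argument is correct and is essentially the paper's own proof: use Proposition \ref{PullbackProp} to identify the image ellipse and the images of the distinguished directions, then invoke the uniqueness principle (a linear bijection of $\mathbb R^2$ is determined by its action on two spanning directions, together with mapping centre to centre and ball onto disc) to conclude that $R_{\theta^{\perp}(a_1\cdots a_n)}\circ D\circ T_{a_1\cdots a_n}^{-1}=S_{t,\underline a}$. Your preliminary detour --- observing that $D\circ T_{a_1\cdots a_n}^{-1}$ carries a round disc to a round disc and hence has linear part $e^t O$ with $O$ orthogonal --- is a harmless repackaging of the same idea, and your ``equivalently'' sentence is precisely the argument given in the paper.

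The one place where you go beyond the paper, the orientation check, is where your justification fails. Strict positivity of the entries of the $A_i$ does not fix the sign of $\det A_i$: for instance $\begin{pmatrix}1&2\\2&1\end{pmatrix}$ has strictly positive entries and determinant $-3$. Consequently $T_{a_1\cdots a_n}^{-1}$ may be orientation reversing, with a parity that depends on how many negative-determinant letters appear in the word $a_1\cdots a_n$; the fact that $\theta(a_1\cdots a_n)$ remains in $\mathcal Q_2$ and $\theta^{\perp}(a_1\cdots a_n)$ in $\mathcal Q_1$ constrains only where the axes go, not the determinant sign, so ``the required sign is fixed'' does not follow. The honest resolution is different: the map $D_{\underline a,\theta,r_1,r_2}$, being specified only by ``major axis to $\{0\}\times[-1,1]$, minor axis to $[-1,1]\times\{0\}$'', is itself defined only up to reflections in the coordinate axes, so one fixes this ambiguity by choosing the representative for which the composition preserves the two directions as \emph{oriented} directions; with that convention the uniqueness argument closes the proof exactly as in the paper. (The paper's own write-up is silent on this point --- its uniqueness claim, as stated for unoriented directions, admits four maps of the form $e^t\,\mathrm{diag}(\pm 1,\pm 1)$ in the $(\theta,\theta^{\perp})$ frame --- so your instinct to isolate the issue was right; only the proposed fix via positivity is wrong.)
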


\begin{proof}
The previous proposition noted that $T_{a_1\cdots a_n}^{-1}(B(\pi(\underline a),e^{-t}))$ is an ellipse centred at $\pi(\sigma^n(\underline a))$. It also follows from the proof that $T_{a_1\cdots a_n}^{-1}$ maps lines in direction $\theta(a_1\cdots a_n)$ to lines in direction $\phi_{a_n}\circ\cdots \phi_{a_1}(\theta(a_1\cdots a_n))$. Furthermore, the perpendicular angles of the major and minor axis of the ellipse $X_{a_1\cdots a_n}$ are mapped on to the perpendicular angles of the minor and major axis of the ellipse $T_{a_1\cdots a_n}^{-1}(B(\pi(\underline a,e^{-t})))$.

Then
\[
D_{\sigma^n(\underline a),\phi_{a_n}\circ\cdots \phi_{a_1}(\theta(a_1\cdots a_n)), t+\log(\alpha_2(a_1\cdots a_n)), t+\log(\alpha_1(a_1\cdots a_n)} \circ T_{a_1\cdots a_n}^{-1}
\]
maps $B(\underline a,e^{-t})$ bijectively onto $X$, where the diameter of $B(\underline a,e^{-t})$ at angle $(\theta(a_1\cdots a_n))$ is mapped to $\{0\}\times[-1,1]$ and the diameter at angle $\theta^{\perp}(a_1\cdots a_n)$ is mapped to $[-1,1]\times\{0\}$.

Rotating by angle $\theta^{\perp}(a_1\cdots a_n)$ we see that the image of the major and minor axes of $X_{a_1\cdots a_n}$ are oriented in the correct direction. 

Then we see that our map is a bijective map from $B(\pi(\underline a),e^{-t})$ to $X$ which maintains the directions $\theta(a_1\cdots a_n)$ and $\theta^{\perp}(a_1\cdots a_n)$, so we are done.
\end{proof}

In particular, this yields the following theorem.

\begin{theorem}\label{MainTheorem}
Let $\mu$ be a Bernoulli measure on a self-affine set $E$ associated to strictly positive matrices, and assume that for $\mu_F$ almost every $\theta\in\mathbb P\mathbb R^1$ the image $\mu_{\theta}$ of $\mu$ under projection in direction $\theta$ is absolutely continuous. Then for $\mu$-almost every $\underline a$ the scenery flow $S_{t,\underline a}(\mu)$ is given by
\[
S_{t,\underline a}(\mu)=R_{\theta^{\perp}(a_1\cdots a_n)}(\nu_{\underline a,\theta(a_1\cdots a_n),t}).
\]
As $t\to\infty$ this flow is asymptotic to the flow
\[
\mathcal R_{F(\underline a)}(\nu_{\underline a,  F_{ss}(\underline a),t})
\]
and so generates the ergodic fractal distribution $R_{ F(\underline a)}\circ P_2$. 
\end{theorem}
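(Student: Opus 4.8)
The plan is to prove the three assertions in turn: the exact factorisation of $S_{t,\underline a}(\mu)$, its asymptotic equivalence to a fixed-angle flow, and the identification of the generated distribution. The first assertion is essentially a repackaging of the geometry already established. By Proposition \ref{DecompositionProp} the dilation $S_{t,\underline a}$ factors as $R_{\theta^{\perp}(a_1\cdots a_n)}\circ D\circ T_{a_1\cdots a_n}^{-1}$ with $n=n(\underline a,t)$. Applying this to $\mu$, the self-affinity relation (Observation 1) together with Proposition \ref{PullbackProp} shows that $D\circ T_{a_1\cdots a_n}^{-1}$ sends the normalised measure $\mu|_{B(\pi(\underline a),e^{-t})}$ to exactly the measure $\nu_{\underline a,\theta(a_1\cdots a_n),t}$ of the previous section, since $f^n(\underline a,\theta(a_1\cdots a_n))$ supplies the first two parameters of the relevant ellipse. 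This gives the stated formula $S_{t,\underline a}(\mu)=R_{\theta^{\perp}(a_1\cdots a_n)}(\nu_{\underline a,\theta(a_1\cdots a_n),t})$ with no further work.

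Next I would establish the asymptotic. As $t\to\infty$ we have $n(\underline a,t)\to\infty$, and the minor-axis direction satisfies $\theta(a_1\cdots a_n)\to F_{ss}(\underline a)$ and $\theta^{\perp}(a_1\cdots a_n)\to F(\underline a)$, uniformly in $\underline a$. Because the angle inside $\nu$ drifts with $t$, I cannot simply freeze it, and this is precisely where Proposition \ref{ContinuityProp} is needed: its uniform-in-$t$ continuity yields, for each $\epsilon>0$, a threshold $T_0$ with $d(\nu_{\underline a,\theta(a_1\cdots a_n),t},\nu_{\underline a,F_{ss}(\underline a),t})<\epsilon$ for all $t>T_0$, while continuity of rotations controls $R_{\theta^{\perp}(a_1\cdots a_n)}\to R_{F(\underline a)}$. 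Since the contribution of $t\le T_0$ to the average $\frac1T\int_0^T\,\cdot\,dt$ is $O(T_0/T)\to 0$, the flows $S_{t,\underline a}(\mu)$ and $R_{F(\underline a)}(\nu_{\underline a,F_{ss}(\underline a),t})$ generate the same distribution whenever either does.

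It then remains to show that for $\mu$-almost every $\underline a$ the fixed-angle flow $R_{F(\underline a)}(\nu_{\underline a,F_{ss}(\underline a),t})$ generates $R_{F(\underline a)}\circ P_2$; as $R_{F(\underline a)}$ is a fixed continuous map this reduces to showing that $\nu_{\underline a,F_{ss}(\underline a),t}$ generates $P_2$. The hard part is exactly here: the generation theorem of the previous section yields $P_2$ only for $\mu\times\mu_F$-almost every pair $(\underline a,\theta)$, whereas the pair $(\underline a,F_{ss}(\underline a))$ lies on the graph of $F_{ss}$, a $\mu\times\mu_F$-null set (the angle $F_{ss}(\underline a)$ is a function of the future of $\underline a$, hence far from independent of it). I would bypass this using continuity: by Fubini, for $\mu$-almost every $\underline a$ the flow $\nu_{\underline a,\theta,t}$ generates $P_2$ for $\mu_F$-almost every $\theta$. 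Fix such an $\underline a$. The set of good $\theta$ is dense in $supp(\mu_F)$, and by Proposition \ref{ContinuityProp} the generated distribution depends continuously on $\theta$, so it must equal $P_2$ for every $\theta\in supp(\mu_F)$.

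Finally I would note that $F_{ss}(\underline a)=\lim_{n\to\infty}\phi_{a_1}\circ\cdots\circ\phi_{a_n}(0)$ has the same law as the direction coordinate $\rho$ of the factor map $\overline{\pi}$ and is therefore distributed according to $\mu_F$ when $\underline a\sim\mu$; hence $F_{ss}(\underline a)\in supp(\mu_F)$ for $\mu$-almost every $\underline a$. Applying the previous paragraph at $\theta=F_{ss}(\underline a)$ shows that $\nu_{\underline a,F_{ss}(\underline a),t}$ generates $P_2$, so $R_{F(\underline a)}(\nu_{\underline a,F_{ss}(\underline a),t})$ generates $R_{F(\underline a)}\circ P_2$, and by the asymptotic of the second step so does $S_{t,\underline a}(\mu)$. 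Since $P_2$ was shown in the previous section to be an ergodic fractal distribution and $R_{F(\underline a)}$ is a fixed rotation, $R_{F(\underline a)}\circ P_2$ is one as well, completing the proof.
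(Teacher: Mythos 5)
Your proof is correct and follows essentially the same route as the paper: the exact formula comes from Proposition \ref{DecompositionProp} together with self-affinity, and the crux is transferring generation of $P_2$ from $\mu\times\mu_F$-typical pairs $(\underline a,\theta)$ to the dependent angle by means of the uniform-in-$t$ continuity of Proposition \ref{ContinuityProp}. The only differences are cosmetic: the paper compares $\nu_{\underline a,\theta(a_1\cdots a_n),t}$ directly with $\nu_{\underline a,\theta,t}$ for a $\mu_F$-typical $\theta$ near $F_{ss}(\underline a)$ rather than passing through the intermediate flow $\nu_{\underline a,F_{ss}(\underline a),t}$, and your explicit justification that $F_{ss}(\underline a)$ is $\mu_F$-distributed (so that typical angles accumulate at $F_{ss}(\underline a)$) makes precise a point the paper leaves implicit.
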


By $R_{ F(\underline a)}\circ P_2$ we mean the distribution on $\mathcal M$ obtained by picking measures $\mu\in\mathcal M$ according to $ P_2$ and then rotating the resulting measure by angle $R_{ F(\underline a)}$.

Hence we see that $\mu$ is not a uniformly scaling measure, unless the foliation $ F(\underline a)$ gives the same angle for each $\underline a$. This happends only when the maps $\phi_i$ all have a common fixed point, in which case the Furstenberg measure $\mu_F$ is a Dirac mass and the corresponding self-affine set has a carpet like construction. In particular, Theorem \ref{PrzTheorem} is a corollary to this theorem.

Finally we comment that one does not automatically have that for $\mu$ almost every $\underline a$ the flow $\nu_{\underline a,  F_{ss}(\underline a),t}$ equidistributes with respect to $P_2$, since there is an obvious dependence between $\underline a$ and $F_{ss}(\underline a)$. Here we rely on our continuity proposition (Proposition \ref{ContinuityProp}) which allows us to replace $F_{ss}(\underline a)$ with $\mu_F$-typical angles $\theta$ close to $F_{ss}(\underline a)$ such that the distance between the orbits $\nu_{\underline a,\theta,t}$ and $\nu_{\underline a,  F_{ss}(\underline a),t}$ remains small. 

\begin{proof}
First we note that, by the ergodic theorem, for $\mu$ almost every $\underline a$ and for all $\epsilon>0$ there exists $\theta\in( F_{ss}(\underline a)-\delta,  F_{ss}(\underline a)+\delta)$ such that the family of measures $\nu_{\underline a,\theta,t}$ equidistributes with respect to $ P_2$. Now since $\theta(a_1\cdots a_n)\to  F_{ss}(\underline a)$ we see that the sequence $\theta(a_1\cdots a_n)$ is eventually bounded within distance $2\delta$ of $\theta$. Then by Proposition \ref{ContinuityProp} we have that the measures $\nu_{\underline a,\theta(a_1\cdots a_n),t}$ and $\nu_{\underline a,\theta,t}$ are within $\epsilon$ of each other, and so, since $\epsilon$ was arbitrary, we have that the family of measures $\nu_{\underline a,\theta(a_1\cdots a_n),t}$ generate $ P_2$.

Finally, incorporating the rotation element and using Proposition \ref{DecompositionProp} we have that $S_{t,\underline a}(\mu)$ generates the distribution $R_{ F(\underline a)}\circ P_2$.
\end{proof}

\section{Further Comments and Open Problems}
Despite having been worked on for over 25 years, a general theory of the dimension of self-affine sets has proved ellusive. Indeed, questions such as whether box dimension always exists for self-affine sets remain open. The scenery flow seems like a natural tool to transfer results from ergodic theory to the study of dimension for self-affine sets.

There are a number of further questions which could lead towards a more general theory of scenery flow for self-affine sets.

{\bf Question 1:} Can one conclude that examples of section $2$ uniformly scaling measures generating ergodic fractal distributions whenever the corresponding Bernoulli convolution is a uniformly scaling measure generating an ergodic fractal distribution?

{\bf Question 2:} Are overlapping self-similar sets uniformly scaling measures generating ergodic fractal distributions? What about projections of self-affine sets? The second part will most likely follow from the first, given the dynamical structure of projections of self-affine sets described in \cite{FalKemp}.

{\bf Question 3:} Suppose that for $\mu_F$ almost every $\theta$ the projection $\pi_{\theta}:E\to[-1,1]$ is one to one. Can one conclude that the self affine measure $\mu$ is a uniformly scaling measure generating an ergodic fractal distribution?

Finally we comment on the condition that the matrices generating our self-affine set should be strictly positive. This condition ensures that the maps $\phi_i$ strictly contract the negative quadrant and hence that $\mu_F$ can be defined via an iterated function system construction. The condition is also useful in making a lot of convergence results uniform. It seems likely that the condition can be relaxed. The Furstenberg measure can be defined without any cone condition, see \cite{BenoistQuint}.

\section{Appendix: Continuity in $\theta$}

\begin{lemma}\label{AngEll1}
\[
\frac{\alpha_1(a_1\cdots a_n)}{\alpha_2(a_1\cdots a_n)}\tan(\phi_{a_1\cdots a_n}(\theta)-\phi_{a_1\cdots a_n}(\theta(a_1\cdots a_n)))=  \tan(\theta-\theta(a_1\cdots a_n))
\]
\end{lemma}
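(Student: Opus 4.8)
The plan is to reduce the identity to the singular value decomposition of the linear part of $T_{a_1\cdots a_n}$. Write $A := A_{a_1\cdots a_n} = A_{a_1}\cdots A_{a_n}$, so that $\phi_{a_1\cdots a_n}$ is the projective action of $A^{-1}$ and $X_{a_1\cdots a_n}$ is, up to translation, the image $A(X)$ of the unit disk. Take a singular value decomposition $A = U\Sigma V^{T}$ with $\Sigma = \mathrm{diag}(\sigma_1,\sigma_2)$, $\sigma_1\geq\sigma_2>0$, and let $u_1,u_2$ and $v_1,v_2$ be the columns of $U$ and $V$. Since $A$ carries the unit circle to the ellipse with semi-axes $\sigma_1 u_1$ and $\sigma_2 u_2$, the major and minor axes of $X_{a_1\cdots a_n}$ point along $u_1$ and $u_2$ respectively; in particular $\alpha_1/\alpha_2 = \sigma_1/\sigma_2$ and $\theta(a_1\cdots a_n)$ is the direction of $u_2$.

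First I would record how $A^{-1}$ moves the two axis directions. From $A^{-1} = V\Sigma^{-1}U^{T}$ we get $A^{-1}u_2 = \sigma_2^{-1}v_2$ and $A^{-1}u_1 = \sigma_1^{-1}v_1$, so projectively $\phi_{a_1\cdots a_n}$ sends the direction of $u_2$ to that of $v_2$ and the direction of $u_1$ to that of $v_1$. Thus it suffices to measure every angle on the left of the identity in the orthonormal frame $\{u_1,u_2\}$ and every angle on the right in the orthonormal frame $\{v_1,v_2\}$, both frames being genuinely orthonormal because $U,V\in O(2)$.

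The core is then a one-line computation. Writing $\beta := \theta - \theta(a_1\cdots a_n)$, the unit vector in direction $\theta$ decomposes as $\sin\beta\, u_1 + \cos\beta\, u_2$, so that $\beta = 0$ recovers the minor axis $u_2$. Applying $A^{-1}$ gives $\sigma_1^{-1}\sin\beta\, v_1 + \sigma_2^{-1}\cos\beta\, v_2$, whose angle relative to $v_2$ — which is exactly $\phi_{a_1\cdots a_n}(\theta) - \phi_{a_1\cdots a_n}(\theta(a_1\cdots a_n))$ — has tangent
\[
\tan\bigl(\phi_{a_1\cdots a_n}(\theta)-\phi_{a_1\cdots a_n}(\theta(a_1\cdots a_n))\bigr) = \frac{\sigma_1^{-1}\sin\beta}{\sigma_2^{-1}\cos\beta} = \frac{\sigma_2}{\sigma_1}\tan\beta = \frac{\alpha_2}{\alpha_1}\tan\bigl(\theta - \theta(a_1\cdots a_n)\bigr).
\]
Multiplying through by $\alpha_1/\alpha_2$ gives the claimed identity.

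The only real obstacle is bookkeeping rather than analysis: one must pin down the orientation conventions on $\mathbb P\mathbb R^1$ so that the decomposition $\sin\beta\,u_1 + \cos\beta\,u_2$ and the readout of the angle relative to $v_2$ carry consistent signs, and confirm that $\phi_{a_1\cdots a_n}$ is indeed the projective action of $A_{a_1\cdots a_n}^{-1}$ in the order fixed by Proposition \ref{PullbackProp} (where the same map appears as $\phi_{a_n}\circ\cdots\circ\phi_{a_1}$). Because both the deviation angle $\beta$ and its image are measured from the minor axis and its image, the two frames really are orthonormal and the formula collapses to the ratio of singular values; this is what makes the identity exact rather than merely asymptotic.
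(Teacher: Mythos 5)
Your proof is correct and takes essentially the same route as the paper's: the paper's terse argument --- that $A_{a_1\cdots a_n}^{-1}$ stretches lines in direction $\theta(a_1\cdots a_n)$ by $\alpha_2(a_1\cdots a_n)^{-1}$ and lines in the perpendicular direction by $\alpha_1(a_1\cdots a_n)^{-1}$, after which ``the lemma follows using basic geometry'' --- is exactly what your singular value decomposition and componentwise tangent computation make precise. The one genuinely useful detail you add is the explicit observation that the image frame $\{v_1,v_2\}$ is still orthonormal (because $U,V\in O(2)$), which is the implicit fact that licenses reading off the tangent as a ratio of components.
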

\begin{proof}
The linear map $A_{a_1\cdots a_n}^{-1}$ stretches lines at angle $\theta(a_1\cdots a_n)$ by $\alpha_2(a_1\cdots a_n)^{-1}$ and lines at angle $\theta(a_1\cdots a_n)^{\perp}$ by $\alpha_1(a_1\cdots a_n)^{-1}$. The lemma follows using basic geometry.
\end{proof}

We now consider when one ellipse can fit inside an expanded, rotated concentric copy of itself.

\begin{lemma}\label{AngEll2}
Let $Y$ be an ellipse centred at the origin with major and minor axes of length $\alpha_1, \alpha_2$ respectively and with major axis oriented along the $y$-axis. Let $Z$ be an ellipse centred at the origin with major and minor axes of length $(1-\epsilon)\alpha_1, (1-\epsilon)\alpha_2$ respectively with major axis oriented at angle $\theta$ from the vertical. Then $Z\subset Y$ whenever $\frac{\alpha_1}{\alpha_2}\tan(\theta)<\frac{1}{1-\epsilon}-1$. 
\end{lemma}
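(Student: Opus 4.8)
The plan is to convert the containment $Z\subset Y$ into a statement about the operator norm of a single linear map, by first distorting $Y$ into a round disk, and then to reduce that bound to an elementary inequality using the fact that rotations have determinant one. Concretely, write $G=\mathrm{diag}(\alpha_2,\alpha_1)$ so that $Y=G(B)$, where $B$ is the unit disk. Since $Z$ is the $(1-\epsilon)$-scaled copy of $Y$ whose major axis has been turned through angle $\theta$ from the vertical, we have $Z=(1-\epsilon)R_\theta G(B)$ with $R_\theta$ rotation by $\theta$. Applying $G^{-1}$, which carries $Y$ onto $B$, the containment $Z\subset Y$ becomes $(1-\epsilon)G^{-1}R_\theta G(B)\subset B$, i.e. the operator-norm condition
\[
\big\|(1-\epsilon)\,G^{-1}R_\theta G\big\|\le 1,
\qquad\text{equivalently}\qquad
\sigma_1\le\frac{1}{1-\epsilon},
\]
where $\sigma_1\ge\sigma_2>0$ are the singular values of $M:=G^{-1}R_\theta G$. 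This is the conceptual heart of the argument: containment of one centred ellipse in another is exactly a bound on the largest singular value of the map relating them.

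Next I would extract the singular values cheaply, avoiding any explicit diagonalisation. A direct multiplication gives $M=\bigl(\begin{smallmatrix}\cos\theta & -\beta\sin\theta\\ \beta^{-1}\sin\theta & \cos\theta\end{smallmatrix}\bigr)$ with $\beta:=\alpha_1/\alpha_2\ge 1$, so $\det M=\cos^2\theta+\sin^2\theta=1$ and hence $\sigma_1\sigma_2=1$. From $\mathrm{tr}(M^{T}M)=\sigma_1^2+\sigma_2^2=2+(\beta-\beta^{-1})^2\sin^2\theta$ I then get $(\sigma_1-\sigma_2)^2=\sigma_1^2+\sigma_2^2-2\sigma_1\sigma_2=(\beta-\beta^{-1})^2\sin^2\theta$, that is $\sigma_1-\sigma_2=(\beta-\beta^{-1})\sin\theta$. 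Thus the whole problem collapses to controlling the single gap $\sigma_1-\sigma_2$ subject to the product constraint $\sigma_1\sigma_2=1$.

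To close, set $\delta:=\frac{1}{1-\epsilon}-1=\frac{\epsilon}{1-\epsilon}$, so the hypothesis reads $\beta\tan\theta<\delta$. Using $\sin\theta\le\tan\theta$ and $\beta-\beta^{-1}\le\beta$ we obtain $\sigma_1-\sigma_2\le\beta\sin\theta\le\beta\tan\theta<\delta$. It remains to deduce $\sigma_1<1+\delta$ from $\sigma_1-\sigma_2<\delta$ and $\sigma_1\sigma_2=1$: were $\sigma_1\ge 1+\delta$, then $\sigma_2=\sigma_1^{-1}\le(1+\delta)^{-1}$, whence $\sigma_1-\sigma_2\ge(1+\delta)-(1+\delta)^{-1}=\frac{\delta(2+\delta)}{1+\delta}\ge\delta$, a contradiction. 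Hence $\sigma_1<1+\delta=\frac{1}{1-\epsilon}$, which is exactly the required bound, and so $Z\subset Y$.

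I expect the main obstacle to be not any single computation but the bookkeeping that makes the stated bound clean. The exact containment threshold is $\sigma_1\le\frac{1}{1-\epsilon}$, which in closed form is the transcendental condition $(\beta-\beta^{-1})\sin\theta\le\frac{\delta(2+\delta)}{1+\delta}$; the lemma's hypothesis $\beta\tan\theta<\delta$ is a deliberately simplified sufficient condition, and the care lies in chaining the harmless inequalities $\sin\le\tan$, $\beta-\beta^{-1}\le\beta$ and $\delta\le\frac{\delta(2+\delta)}{1+\delta}$ in the correct direction so that this crude hypothesis still forces the sharp containment inequality. An equivalent but more laborious route substitutes a boundary point $(1-\epsilon)(\alpha_1\cos t\,\hat v+\alpha_2\sin t\,\hat u)$ of $Z$ into the quadratic form defining $Y$ and maximises the result over $t$; this reproduces the same coefficients but obscures the clean role played by $\det R_\theta=1$, which is what makes the singular-value formulation short.
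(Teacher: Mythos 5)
Your proof is correct, and it follows a genuinely different route from the one in the paper. The paper works ray by ray: for each angle $\rho$ it compares the segment from the origin to the boundary of $Z$ with the corresponding segment for $Y$, expanding $\cos(\rho-\theta)$ and $\sin(\rho-\theta)$ by the addition formulae and factoring, which yields the sufficient condition $\cos\theta+\frac{\alpha_1}{\alpha_2}\sin\theta\le\frac{1}{1-\epsilon}$; this is then deduced from the hypothesis by the same elementary estimates ($\cos\theta\le 1$, $\sin\theta\le\tan\theta$) that you use at the end. You instead linearise the whole problem: conjugating by $G=\mathrm{diag}(\alpha_2,\alpha_1)$ converts the containment into the operator-norm bound $\|(1-\epsilon)G^{-1}R_\theta G\|\le 1$, and the two invariants $\det M=1$ and $\mathrm{tr}(M^{T}M)$ of $M=G^{-1}R_\theta G$ pin down the singular values exactly, giving $\sigma_1-\sigma_2=(\beta-\beta^{-1})\sin\theta$ with $\beta=\alpha_1/\alpha_2$, after which the product constraint $\sigma_1\sigma_2=1$ closes the argument. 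What each buys: the paper's computation is shorter and purely trigonometric; yours isolates the sharp containment criterion $\sigma_1\le\frac{1}{1-\epsilon}$ and makes transparent that the stated hypothesis is merely a clean sufficient condition for it. Your route is also the more watertight of the two: the paper's opening assertion that the segment from the origin to the boundary of $Y$ at angle $\rho$ has length $\alpha_1\cos\rho+\alpha_2\sin\rho$ is not the true radial distance of an ellipse (it is exact only on the axes and an overestimate elsewhere), so the paper's ray-by-ray comparison is really a comparison of convenient proxies for the radial extents, whereas the equivalence between containment of centred ellipses and an operator-norm bound is exact. Since the final inequality chains coincide, the difference between the two proofs is conceptual rather than computational.
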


\begin{proof}
The line from the origin to the boundary of $Y$ at angle $\rho$ has length $\alpha_1 \cos(\rho) + \alpha_2 \sin(\rho)$. The corresponding line for ellipse $Z$ has length
\begin{eqnarray*}
& &(1-\epsilon)(\alpha_1(\cos(\rho-\theta))+\alpha_2(\sin(\rho-\theta)))\\
&=&(1-\epsilon)(\alpha_1(\cos(\rho)\cos(\theta)+\sin(\rho)\sin(\theta))+\alpha_2(\sin(\rho)\cos(\theta)-\cos(\rho)\sin(\theta)))\\
&\leq& (1-\epsilon)(\alpha_1\cos(\rho)+\alpha_2\sin(\rho))(\cos(\theta)+\frac{\alpha_1}{\alpha_2}(\sin(\theta)). 
\end{eqnarray*}
So if we have
\[
\cos(\theta)+\frac{\alpha_1}{\alpha_2}\sin(\theta)\leq\frac{1}{1-\epsilon}
\]
then we will have for each angle $\rho$ that the slice through $Z$ at angle $\rho$ is a subset of the slice through $Y$ at angle $\rho$, and hence that $Z\subset Y$. The above inequality holds whenever
\[
\frac{\alpha_1}{\alpha_2}\tan(\theta)<\frac{1}{1-\epsilon}-1
\]
as required.
\end{proof}

Combining the last two lemmas gives us the following lemma.

\begin{lemma}
let $\underline a\in\Sigma$ and suppose that $\theta$ is such that $|\tan(\theta)-\tan(F_{ss}(\underline a))|<\frac{1}{1-\epsilon}-1$. Then
\[
Y_{f^n(\underline a,\theta), -\log(\alpha_1(a_1\cdots a_n)-\epsilon, -\log(\alpha_1(a_1\cdots a_n))-\epsilon}\subset Y_{f^n(\underline a, \theta(a_1\cdots a_n)),-\log(\alpha_1(a_1\cdots a_n)), -\log(\alpha_2(a_1\cdots a_n)))}
\]
for all large enough $n$.
\end{lemma}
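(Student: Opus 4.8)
The plan is to recognise the two ellipses in the statement as concentric copies of one another and then feed them into the two preceding lemmas in sequence, the only genuinely delicate point being that the eccentricity must be controlled uniformly as $n\to\infty$. First I would unwind the notation. Both ellipses are centred at $\pi(\sigma^n(\underline a))$. The larger one, $Y_{f^n(\underline a,\theta(a_1\cdots a_n)),-\log\alpha_1(a_1\cdots a_n),-\log\alpha_2(a_1\cdots a_n)}$, has major and minor axes of lengths $2\alpha_1(a_1\cdots a_n)$ and $2\alpha_2(a_1\cdots a_n)$ with major axis in direction $\phi_{a_1\cdots a_n}(\theta(a_1\cdots a_n))$, where $\phi_{a_1\cdots a_n}=\phi_{a_n}\circ\cdots\circ\phi_{a_1}$ is the action of $A_{a_1\cdots a_n}^{-1}$ on projective space. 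The smaller one, $Y_{f^n(\underline a,\theta),\ldots}$, is the concentric copy with the same axis ratio scaled by the factor $e^{-\epsilon}\approx 1-\epsilon$, but with major axis pointing in direction $\phi_{a_1\cdots a_n}(\theta)$. Thus the smaller ellipse is exactly a $(1-\epsilon)$-shrunk, rotated copy of the larger one, rotated through the angle $\phi_{a_1\cdots a_n}(\theta)-\phi_{a_1\cdots a_n}(\theta(a_1\cdots a_n))$, placing us precisely in the situation of Lemma \ref{AngEll2} with $\alpha_1=\alpha_1(a_1\cdots a_n)$ and $\alpha_2=\alpha_2(a_1\cdots a_n)$.

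Applying Lemma \ref{AngEll2} (after the rigid rotation aligning the larger ellipse's major axis with the vertical), I obtain the desired containment provided
\[
\frac{\alpha_1(a_1\cdots a_n)}{\alpha_2(a_1\cdots a_n)}\tan\!\left(\phi_{a_1\cdots a_n}(\theta)-\phi_{a_1\cdots a_n}(\theta(a_1\cdots a_n))\right)<\frac{1}{1-\epsilon}-1.
\]
The key step is to identify the left-hand side: by Lemma \ref{AngEll1} it equals exactly $\tan(\theta-\theta(a_1\cdots a_n))$. I expect this to be the crux of the argument, and the main obstacle to a naive estimate. As $n\to\infty$ the eccentricity $\alpha_1/\alpha_2$ blows up while the angle $\phi_{a_1\cdots a_n}(\theta)-\phi_{a_1\cdots a_n}(\theta(a_1\cdots a_n))$ shrinks to $0$ (because $\phi_{a_1\cdots a_n}$ strictly contracts the negative quadrant), so it is not a priori clear the product stays bounded; Lemma \ref{AngEll1} shows that the angular contraction cancels the eccentricity exactly, leaving the bounded quantity $\tan(\theta-\theta(a_1\cdots a_n))$.

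It then remains to pass to the limit. Since $\theta(a_1\cdots a_n)$ is the minor-axis direction of $X_{a_1\cdots a_n}$, it converges (uniformly in $\underline a$) to the strong stable foliation $F_{ss}(\underline a)$ as $n\to\infty$, so $\tan(\theta-\theta(a_1\cdots a_n))\to\tan(\theta-F_{ss}(\underline a))$. The hypothesis $|\tan\theta-\tan F_{ss}(\underline a)|<\tfrac{1}{1-\epsilon}-1$, being a strict inequality with slack, then forces $\tan(\theta-\theta(a_1\cdots a_n))<\tfrac{1}{1-\epsilon}-1$ for all sufficiently large $n$, and for those $n$ the displayed criterion of Lemma \ref{AngEll2} holds, yielding the containment. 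The one place where care is needed is reconciling the two forms $\tan\theta-\tan F_{ss}(\underline a)$ and $\tan(\theta-\theta(a_1\cdots a_n))$, which differ in general; I would handle this by noting that both vanish and are comparable when $\theta$ is close to $F_{ss}(\underline a)$ (the only regime of interest for the continuity statement this lemma feeds into), so the strictness of the hypothesis absorbs the discrepancy once $n$ is taken large.
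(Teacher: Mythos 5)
Your proposal is correct and follows essentially the same route as the paper's own proof: Lemma \ref{AngEll1} to identify the product of the eccentricity $\alpha_1/\alpha_2$ with the tangent of the contracted angle difference as $\tan(\theta-\theta(a_1\cdots a_n))$, the convergence $\theta(a_1\cdots a_n)\to F_{ss}(\underline a)$ to get the required strict bound for all large $n$, and then Lemma \ref{AngEll2} to conclude containment. The one point you handle more carefully than the paper is the mismatch between the hypothesis $|\tan\theta-\tan F_{ss}(\underline a)|<\frac{1}{1-\epsilon}-1$ and the needed bound on $\tan(\theta-\theta(a_1\cdots a_n))$ (the paper silently conflates the two); your observation that the slack in the strict inequality absorbs the discrepancy for $\theta$ near $F_{ss}(\underline a)$ is sound, since both angles lie in $\mathcal Q_2$ and the tangent subtraction formula then only shrinks the quantity.
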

\begin{proof}
By lemma \ref{AngEll1} we have that
\[
\frac{\alpha_1(a_1\cdots a_n)}{\alpha_2(a_1\cdots a_n)}\tan(\phi_{a_1\cdots a_n}(\theta)-\phi_{a_1\cdots a_n}(\theta(a_1\cdots a_n)))=  \tan(\theta-\theta(a_1\cdots a_n))<\frac{1}{1-\epsilon}-1
\]
eventually, since $\theta(a_1\cdots a_n)\to  F_{ss}(\underline a)$. Then by lemma \ref{AngEll2} we are done.
\end{proof}
We now consider our maps $D$ which dilate ellipses. We show that if $Z\subset Y$ with the area of $Y$ close to that of $Z$ then the measure $D_Z(\mu|_{Z})$ is close to $D_Y(\mu|_{Y})$. We do this by showing that the natural magnification map $D_Z$ from $Z$ to the unit disk is the same as first magnifying $Z$ using the magnification map $D_Y$ on $Y$ to get some other ellipse $W\subset X$, and then using the magnification map $D_W$ on $W$.
\begin{lemma}
Let $Y_{\underline a,\theta,r_1,r_2}\subset Y_{\underline a,\theta',r_1',r_2'}.$ Let $\underline a'',\theta'',r_1'',r_2''$ be such that
\[
D_{\underline a,\theta',r_1',r_2'}(Y_{\underline a,\theta,r_1,r_2})=Y_{\underline a'',\theta'',r_1'',r_2''}\subset X.
\]

Then
\[
D_{\underline a,\theta,r_1,r_2}=D_{\underline a'',\theta'',r_1'',r_2''}\circ D_{\underline a,\theta',r_1',r_2'}:Y_{\underline a,\theta,r_1,r_2}\to X.
\]
\end{lemma}

\begin{proof}
Since the map $D_{\underline a'',\theta'',r_1'',r_2''}\circ D_{\underline a,\theta',r_1',r_2'}:Y_{\underline a,\theta,r_1,r_2}\to X$ is bijective and maps the major and minor axes onto the vertical and horizontal axes, this is immediate.
\end{proof}

\begin{lemma}
For all $\epsilon>0$ there exists $\delta>0$ such that whenever ellipse $W\subset X$ has area larger than $1-\delta$ and long axis oriented within $\delta$ of the vertical then the map $D_W:W\to Z$ is within $\epsilon$ of the identity map.
\end{lemma}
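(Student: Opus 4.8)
The plan is to reduce the statement to a continuity estimate for the family of linear maps $D_W$, parametrised by the geometry of $W$. First I would record that $D_W$ is, by definition, the bijective linear map sending $W$ onto the unit disk $X$ so that the major and minor axes of $W$ go to the vertical and horizontal diameters of $X$. Concretely, if $W$ is centred at the origin with semi-axes $a\geq b$ and major axis making angle $\psi$ with the vertical, then
\[
D_W=\mathrm{diag}\!\left(\tfrac1b,\tfrac1a\right)\circ R_{\psi},
\]
where $R_{\psi}$ is the rotation carrying the major-axis direction to the vertical (tending to the identity as $\psi\to 0$). Thus $D_W$ is determined by the three parameters $(a,b,\psi)$, and at $(a,b,\psi)=(1,1,0)$ it is exactly the identity.

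Next I would convert the two hypotheses into bounds on these parameters. Since $W\subset X$, its major semi-axis satisfies $a\leq 1$, and as $b\leq a$ we also have $b\leq 1$. The area hypothesis $\mathrm{area}(W)>(1-\delta)\,\mathrm{area}(X)$ reads $ab>1-\delta$, which together with $a,b\leq 1$ forces $1-\delta<b\leq a\leq 1$; hence both semi-axes lie in $(1-\delta,1]$. The orientation hypothesis gives $|\psi|<\delta$ directly. Therefore all three defining parameters $(a,b,\psi)$ lie within $\delta$ of $(1,1,0)$.

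Finally, the assignment $(a,b,\psi)\mapsto D_W$ is continuous (indeed real-analytic) as a map into the space of $2\times 2$ matrices, and sends $(1,1,0)$ to the identity matrix $I$. Hence $\|D_W-I\|_{\mathrm{op}}\to 0$ as $(a,b,\psi)\to(1,1,0)$, and by uniform continuity on a fixed compact neighbourhood of $(1,1,0)$ (for instance $[\tfrac12,1]^2\times[-\tfrac12,\tfrac12]$, valid once $\delta\leq\tfrac12$) one may choose $\delta>0$ so that the parameter bounds above yield $\|D_W-I\|_{\mathrm{op}}<\epsilon$. Since every point of $W$ has norm at most $1$, linearity gives $\sup_{x\in W}|D_W x-x|\leq\|D_W-I\|_{\mathrm{op}}\,\sup_{x\in W}|x|<\epsilon$, which is the asserted closeness of $D_W$ to the identity. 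I expect no serious obstacle here: the only genuine content is the elementary observation that an ellipse contained in $X$ of nearly full area must have both semi-axes close to $1$, after which the conclusion is pure continuity of finite-dimensional linear maps. The mild bookkeeping that remains is making this continuity quantitative and fixing the notion of closeness (operator norm, equivalently sup-distance over $W$).
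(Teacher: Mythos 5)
Your proof is correct and is essentially the paper's own argument: the paper disposes of this lemma with the single line ``This is again immediate,'' and your parametrization of $D_W$ by $(a,b,\psi)$ together with the observation that $W\subset X$ and $ab>1-\delta$ force $a,b\in(1-\delta,1]$, followed by continuity of $(a,b,\psi)\mapsto\mathrm{diag}(1/b,1/a)\circ R_\psi$ at $(1,1,0)$, is exactly the verification the paper deems immediate. The only point worth noting is that you implicitly take $W$ centred at the origin, which is the relevant case here (these $W$ arise as images of concentric ellipses under the maps $D$, which send the common centre to the origin); for a general $W\subset X$ the same hypotheses force the centre to lie within $\sqrt{2\delta}$ of the origin, so the translation part of $D_W$ is also small and your argument extends without change.
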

This is again immediate.

Putting all of the previous lemmas together yields the following theorem.
\begin{theorem}
For all $\epsilon>0$ there exists $\delta>0$  such that, for all $\theta$ with $| F_{ss}(\underline a)-\theta|<\delta$ we have that
\[
d\left(\nu_{\underline a,\theta,t},D_{\sigma^n(\underline a),\phi_{a_n}\circ\cdots \phi_{a_1}(\theta(a_1\cdots a_n)), t+\log(\alpha_2(a_1\cdots a_n)), t+\log(\alpha_1(a_1\cdots a_n))} \circ T_{a_1\cdots a_n}^{-1}(\mu|_{B(\pi(\underline a), e^{-t})})\right)<\epsilon.
\]
\end{theorem}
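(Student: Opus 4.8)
The plan is to chain together the geometric lemmas of the appendix to compare the two ellipses on which the dilation maps act, and then invoke the two commutativity/stability lemmas for the maps $D$. The key point is that $\nu_{\underline a,\theta,t}$ is built by dilating $\mu$ restricted to an ellipse centred at $\pi(\sigma^n(\underline a))$ with orientation $\phi_{a_n}\circ\cdots\phi_{a_1}(\theta)$, whereas the right-hand measure uses the orientation $\phi_{a_n}\circ\cdots\phi_{a_1}(\theta(a_1\cdots a_n))$ coming from the true minor-axis direction $\theta(a_1\cdots a_n)$. When $\theta$ is within $\delta$ of $F_{ss}(\underline a)$, these two orientations are forced to be close, and I want to conclude that the two dilated measures are correspondingly close in the Prokhorov metric.

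First I would fix $\epsilon>0$ and extract, from the final appendix lemma (stability of $D_W$ near the identity), a threshold $\delta_1>0$: whenever an ellipse $W\subset X$ has area exceeding $1-\delta_1$ and major axis within $\delta_1$ of the vertical, the map $D_W$ is within $\epsilon$ of the identity on measures. Next, using Lemma~\ref{AngEll2} together with Lemma~\ref{AngEll1}, I would choose the auxiliary contraction factor $\epsilon'$ (there denoted $\epsilon$) small enough that the two ellipses $Y_{f^n(\underline a,\theta),\cdots}$ and $Y_{f^n(\underline a,\theta(a_1\cdots a_n)),\cdots}$ are nested with areas within $\delta_1$ of one another once $n$ is large. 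This is exactly the content of the combined lemma, which gives the inclusion for all large enough $n$ precisely when $|\tan\theta-\tan F_{ss}(\underline a)|$ is small; so I would pick $\delta>0$ so that $|F_{ss}(\underline a)-\theta|<\delta$ forces $|\tan\theta-\tan F_{ss}(\underline a)|<\tfrac{1}{1-\epsilon'}-1$.

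With the nesting established, I would apply the penultimate appendix lemma (the factorisation $D_{\underline a,\theta,r_1,r_2}=D_{\underline a'',\theta'',r_1'',r_2''}\circ D_{\underline a,\theta',r_1',r_2'}$) to rewrite the dilation associated with the smaller ellipse as the dilation associated with the larger ellipse followed by the residual map $D_W$ on the intermediate ellipse $W\subset X$. Because the two ellipses have nearly equal area and nearly aligned axes, $W$ has area exceeding $1-\delta_1$ and major axis within $\delta_1$ of vertical, so by the choice of $\delta_1$ the residual $D_W$ is within $\epsilon$ of the identity. Since $D_W$ acts on a probability measure supported in $X$, its $\epsilon$-closeness to the identity transfers directly to an $\epsilon$-bound in the Prokhorov metric between $\nu_{\underline a,\theta,t}$ and the target measure. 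The uniformity in $t$ comes from the fact that $n=n(\underline a,t)$ only needs to be large, and all the geometric estimates depend on $n$ and the angular discrepancy rather than on $t$ directly; the bounded-eccentricity remarks preceding Proposition~\ref{PullbackProp} guarantee the relevant quantities stay controlled as $t\to\infty$.

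The main obstacle I anticipate is the bookkeeping needed to identify the intermediate ellipse $W$ and verify that its area and orientation satisfy the hypotheses of the stability lemma uniformly in $t$. In particular one must check that $\mu$ restricted to the larger ellipse and then pushed into $X$ genuinely agrees, up to the residual map $D_W$, with the measure appearing on the right-hand side — this requires that the difference between the two constructions is captured entirely by the concentric rotated-and-slightly-scaled copy, and not by any shift of centre. Since both ellipses are concentric at $\pi(\sigma^n(\underline a))$ by construction and the only discrepancy is in orientation and the $\epsilon'$-scaling, this should follow, but making the area comparison quantitative enough to feed into $\delta_1$ while keeping everything independent of $t$ is where the care is needed.
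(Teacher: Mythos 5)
Your proposal takes essentially the same route as the paper: the paper's own proof consists precisely of chaining the appendix lemmas in the order you describe --- Lemmas \ref{AngEll1} and \ref{AngEll2} yield the nesting of the slightly shrunk, rotated ellipse for large $n$, the factorisation lemma rewrites the dilation of the inner ellipse as the dilation of the outer one followed by a residual map $D_W$, and the stability lemma forces $D_W$ to be within $\epsilon$ of the identity, giving the Prokhorov bound uniformly in $t$. The renormalisation concern you flag at the end (that one needs the $\mu$-mass, not merely the area, of the region between the nested ellipses to be negligible after normalising the restricted measures) is a genuine subtlety, but the paper's proof, stated only as ``putting all of the previous lemmas together,'' does not address it either, so your write-up is faithful to (indeed more explicit than) the paper's argument.
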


This continuity theorem allows one to use the flow giving rise to measures $\nu$ to infer properties of the scenery flow.

\section*{Acknowledgements}
Many thanks to Jon Fraser and Kenneth Falconer for many useful discussions. This work was supported by the EPSRC, grant number EP/K029061/1.

\bibliographystyle{plain} 
\bibliography{SelfAffine.bib}
\end{document}